\newtheorem{theorem}{{\bf Theorem}}
\newtheorem{remark}{{\bf Remark}}
\newtheorem{corollary}[theorem]{{\bf Corollary}}
\newtheorem{proposition}[theorem]{{\bf Proposition}}
\newtheorem{lemma}[theorem]{{\bf Lemma}}
\def\bfx{\boldsymbol{x}}
\def\bfy{\boldsymbol{y}}
\def\bfu{\boldsymbol{u}}
\def\bfv{\boldsymbol{v}}
\def\bfw{\boldsymbol{w}}
\def\bfN{\boldsymbol{N}}
\def\bfk{\boldsymbol{k}}
\def\bfI{\boldsymbol{I}}
\begin{document}
\begin{frontmatter}
%

\title{On the computation of the straight lines contained in a rational surface.}

\author[a]{Juan Gerardo Alc\'azar\fnref{proy,proy2}}
\ead{juange.alcazar@uah.es}
\author[b]{Jorge Caravantes\fnref{proy}}
\ead{jcaravan@mat.ucm.es}

\address[a]{Departamento de F\'{\i}sica y Matem\'aticas, Universidad de Alcal\'a,
E-28871 Madrid, Spain}
\address[b]{Departamento de \'Algebra, Universidad Complutense de Madrid, E-28040 Madrid, Spain}

\fntext[proy]{Partially supported by the Spanish Ministerio de Econom\'{\i}a y Competitividad and by the European Regional Development Fund (ERDF), under the project  MTM2014-54141-P.}

\fntext[proy2]{Member of the Research Group {\sc asynacs} (Ref. {\sc ccee2011/r34}) }

\begin{abstract}
In this paper we present an algorithm to compute the (real and complex) straight lines contained in a rational surface, defined by a rational parameterization. The algorithm relies on the well-known theorem of Differential Geometry that characterizes real straight lines contained in a surface as curves that are simultaneously asymptotic lines, and geodesics. We also report on an implementation carried out in Maple 18, and we compare the behavior of our algorithm with two brute-force approaches.
\end{abstract}

\end{frontmatter}




\section{Introduction}

Straight lines are certainly notable curves in an algebraic surface. Probably the most famous result on algebraic surfaces containing straight lines is related to cubic surfaces: G. Salmon \cite{Salmon}, after correspondence with A. Cayley, proved that projective smooth cubic surfaces contain exactly 27 (projective, complex and real) straight lines, some of them at infinity. Schl\"affi \cite{Sch} proved later that the number of real straight lines must be 3, 7, 15 or 27. If the cubic is singular \cite{Bajaj2}, the number of straight lines goes down to 21.

Projective nonsingular cubic surfaces happen to be rational surfaces. If a parameterization of a rational cubic surface is known, one can compute the straight lines contained in the surface from the base points of the parameterization \cite{Bajaj2,Berry}. However, unlike cubics, surfaces of degree higher than 3 do not necessarily contain straight lines (see Theorem 1.27 in \cite{Shaf}). Furthermore, in the affirmative case, up to our knowledge there is no known algorithm other than a brute-force approach to find them. Brute-force algorithms to find straight lines proceed by translating the problem into polynomial system solving; this is certainly feasible for problems of small size, but turns increasingly non-feasible as the problem grows in size.

Computing the straight lines in a surface can be interesting on its own right, but it provides, additionally, useful information on the surface. Knowing the straight lines contained in a surface helps to find the symmetry center, symmetry planes and symmetry axes, if any, of the surface, since any of these symmetries maps the straight lines onto each other. Also, this information can help to identify whether or not two given surfaces are \emph{similar}, i.e. equal up to position and scaling, since two similar surfaces contain the same number of straight lines, that must be mapped to each other by the similarity. Additionally, if the surface contains some real line, then the surface is non-compact. 

Moreover, in the case of cubic surfaces Mordell proved (see Chapter 11 in \cite{Mordell}) that all rational points on a cubic surface can be found if the cubic surface contains two straight lines whose equations are defined by conjugate numbers of a quadratic field, and in particular by rational numbers. On the other hand, straight lines contained in certain surfaces are essential to describe the geometry of the surface, and therefore to accurately render it. For instance, Whitney's umbrella contains a straight line, the handle of the umbrella, sometimes missed in certain visualization packages. Steiner's surface contains three (possibly complex) double lines, intersecting at a triple singularity.

From the point of view of applications, in general a straight line contained in a curved artifact is a privileged curve that can be used for different purposes, either practical or ornamental. In particular straight lines contained in a surface are useful in Architecture \cite{Pottmann}, since such lines provide natural locations for beams supporting the structure. In fact, {\it minimal surfaces}, surfaces with the property of spanning a given boundary with minimum area, are used in Architecture and Mechanical Design \cite{Emmer, Wallner,Yoo}; and as discovered by Schwarz \cite{Dierkes}, any straight line contained in a minimal surface is an axis of symmetry of the surface.

In this paper we approach the problem of determining the straight lines contained in a surface defined by a rational parameterization of any degree. In order to do this, we exploit the well-known result in Differential Geometry that characterizes real non-singular straight lines contained in a surface, as curves that are simultaneously asymptotic lines, and geodesics. This characterization provides {\it differential} conditions to find the straight lines contained in the surface, that we transform into {\it algebraic conditions}; this way, we can take advantage of classical methods in polynomial algebra to solve the problem. Other special straight lines, in particular the ones contained in the singular part of the parameterization, can also be found. Additionally, the same strategy also allows to compute the complex straight lines contained in the surface.

The structure of the paper is the following. In Section 2 we provide some known and preliminary results. In Section 3 we present the algorithm. We prove that the algorithm {works properly} in Section 4. Section 5 reports on practical results, timings, etc. of an implementation of the algorithm in the computer algebra system Maple 18, that can be downloaded from \cite{Jorge}, as well as on the behavior of two brute-force approaches to the problem. Our conclusions are presented in Section 6. In Appendix I, we list the parametrizations used in the examples of Section 5. 

In the paper, sometimes we will use the term ``line" to refer to a curve contained in the surface we are analyzing. Hence, not every ``line" is a ``straight line", although the converse statement is, obviously, true.

\section{Notation, terminology, hypotheses and known results.} \label{sec-prelim.}

In this section we fix the notation, terminology and hypotheses to be used throughout this paper; we refer the interested reader to \cite{Docarmo, Patrikalakis, Struik} for further reading on the concepts and results mentioned here, mostly coming from the field of Differential Geometry. 

In this paper we work with a rational surface $S\subset \mathbb{R}^3$, not a plane, parameterized by 
\[\bfx(t,s)=(x(t,s),y(t,s),z(t,s)),\]
where $x(t,s)$, $y(t,s)$, $z(t,s)$ are real rational functions. We assume that this parameterization is {\it proper}, i.e. injective for almost all points of $S$, or equivalently that the mapping 
\[
\begin{array}{ccc}
\mathbb{ C}^2 & \to & S\\
(t,s) & \to & \bfx(t,s)
\end{array}
\]
is birational. One can check properness by using the algorithms in \cite{PDSS02, PDS04}; for reparameterization questions one can see \cite{PD06, PD13}. We will consider the Euclidean space $\mathbb{ R}^3$ furnished with the usual dot product $\langle\mbox{ , }\rangle$, and the usual Euclidean norm $\Vert \cdot \Vert$. Moreover, we use the notation $\bullet_t$ (resp. $\bullet_s$) for the partial derivative of $\bullet$ with respect to $t$ (resp. $s$); similarly $\bullet_{tt}$, $\bullet_{ts}$, $\bullet_{ss}$ represent second partial derivatives of $\bullet$. 

We say that $\bfx(t,s)$ is {\it regular} at a point $P_0=\bfx(t_0,s_0)$, if $({\bfx}_t\times {\bfx}_s)(t_0,s_0)\neq 0$. At such a point we can define the {\it normal vector} $\bfN$,
\begin{equation}\label{eq-normal}
\bfN=\frac{{\bfx}_t\times {\bfx}_s}{\Vert {\bfx}_t\times {\bfx}_s \Vert}.
\end{equation}
If $({\bfx}_t\times {\bfx}_s)(t_0,s_0)=0$, in which case $\bfN$ is not defined, we say that $\bfx(t_0,s_0)$ is a {\it singular point of the parameterization $\bfx(t,s)$}; we denote by $\mbox{Sing}_{\bfx}$ the set consisting of all the singular points of $\bfx(t,s)$. Furthermore, we represent the matrices defining the \emph{first fundamental form} and \emph{second fundamental form} of $S$ by $I,II$, respectively; recall that
\begin{equation}\label{forms}
\hspace{-0.4 cm}\begin{array}{cc}
\begin{array}{cc}
I=\begin{bmatrix}
 E & F\\
 F & G
\end{bmatrix}=\begin{bmatrix}
 \langle\bfx_t, \bfx_t \rangle & \langle\bfx_t,\bfx_s\rangle\\
 \langle\bfx_s, \bfx_t \rangle & \langle\bfx_s, \bfx_s\rangle
\end{bmatrix},
\end{array} & 
\begin{array}{cc}
II=\begin{bmatrix}
 e & f\\
 f & g
\end{bmatrix}=\begin{bmatrix}
\langle \bfx_{tt}, \bfN \rangle& \langle\bfx_{ts},\bfN\rangle\\
\langle \bfx_{st},\bfN \rangle& \langle\bfx_{ss}, \bfN\rangle
\end{bmatrix}.
\end{array}
\end{array}
\end{equation}

\noindent We also introduce the following notation:
\begin{equation}\label{star}
e^{\star}=\langle \bfx_{tt},\bfx_t\times \bfx_s\rangle, \hspace{0.4 cm}f^{\star}=\langle \bfx_{ts},\bfx_t\times \bfx_s\rangle, \hspace{0.4 cm}g^{\star}=\langle \bfx_{ss},\bfx_t\times \bfx_s\rangle.
\end{equation}
Notice that $e^{\star}$, $f^{\star}$, $g^{\star}$ result from multiplying $e,f,g$ by $\Vert {\bfx}_t\times {\bfx}_s\Vert$. Since $\bfx(t,s)$ is rational, $e^{\star}$, $f^{\star}$, $g^{\star}$ are rational functions. The following result is well-known. 

\begin{theorem}\label{known}
Let ${\mathcal C}\subset S$ be a real curve contained in a surface $S$ parameterized by $\bfx(t,s)$, and assume that ${\mathcal C}\not\subset \mbox{Sing}_{\bfx}$. Then ${\mathcal C}$ is a straight line iff it is simultaneously an asymptotic line, and a geodesic line. 
\end{theorem}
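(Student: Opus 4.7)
The plan is to parameterize the curve $\mathcal{C}$ by arc length and use the orthogonal decomposition of its acceleration vector with respect to the Darboux frame, exploiting the equivalence "straight line iff zero second derivative in arc length".

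First I would pick an arc length parameterization $\alpha(\ell)$ of $\mathcal{C}$, so that $\mathbf{t}:=\alpha'$ is the unit tangent and $\langle \alpha',\alpha''\rangle=0$, i.e., $\alpha''\perp \mathbf{t}$. Since $\mathcal{C}\not\subset \mbox{Sing}_{\bfx}$, the set $\mathcal{C}^\ast:=\mathcal{C}\setminus \mbox{Sing}_{\bfx}$ is a nonempty open subset of $\mathcal{C}$ on which the unit normal $\mathbf{N}$ from \eqref{eq-normal} is well defined; and since our setting is algebraic/analytic, $\mathcal{C}^\ast$ is in fact dense in $\mathcal{C}$. On $\mathcal{C}^\ast$ the Darboux frame $\{\mathbf{t},\mathbf{N}\times\mathbf{t},\mathbf{N}\}$ is an orthonormal basis of $\mathbb{R}^3$, and because $\alpha''\perp\mathbf{t}$ we have the canonical decomposition
\[
\alpha'' \;=\; \kappa_n\,\mathbf{N}\;+\;\kappa_g\,(\mathbf{N}\times\mathbf{t}),
\]
where $\kappa_n=\langle\alpha'',\mathbf{N}\rangle$ is the normal curvature and $\kappa_g=\langle\alpha'',\mathbf{N}\times\mathbf{t}\rangle$ is the geodesic curvature. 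By definition, $\mathcal{C}$ is asymptotic on $\mathcal{C}^\ast$ precisely when $\kappa_n\equiv 0$ there, and geodesic on $\mathcal{C}^\ast$ precisely when $\kappa_g\equiv 0$ there.

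For the implication $(\Rightarrow)$, if $\mathcal{C}$ is a straight line then an arc length parameterization takes the form $\alpha(\ell)=\mathbf{p}_0+\ell\mathbf{v}$ with $\|\mathbf{v}\|=1$, hence $\alpha''\equiv 0$; evaluating the decomposition at points of $\mathcal{C}^\ast$ and using orthonormality gives $\kappa_n=\kappa_g=0$, so $\mathcal{C}$ is simultaneously asymptotic and geodesic. For the converse $(\Leftarrow)$, if $\kappa_n$ and $\kappa_g$ both vanish on $\mathcal{C}^\ast$, then the decomposition forces $\alpha''\equiv 0$ on the dense open subset $\mathcal{C}^\ast$; by continuity of $\alpha''$, we get $\alpha''\equiv 0$ on all of $\mathcal{C}$, so $\alpha(\ell)=\mathbf{p}_0+\ell\mathbf{v}$ and $\mathcal{C}$ is a straight line.

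The only mildly delicate step is handling the singular locus: one must be careful that the Darboux decomposition, which a priori is only meaningful on $\mathcal{C}^\ast$, yields information valid on the whole curve. This is why the hypothesis $\mathcal{C}\not\subset \mbox{Sing}_{\bfx}$ is essential; it guarantees that $\mathcal{C}^\ast$ is nonempty and dense in $\mathcal{C}$, so the pointwise identity $\alpha''=0$ extends from $\mathcal{C}^\ast$ to $\mathcal{C}$ by continuity. Everything else reduces to the elementary fact that a $C^2$ curve with vanishing second derivative in arc length is an affine line.
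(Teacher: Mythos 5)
Your proposal is essentially correct, and it is worth noting that the paper itself gives no proof of Theorem \ref{known}: the result is quoted as well-known (with pointers to \cite{Docarmo,Struik}), and the machinery you use --- the orthogonal decomposition $\bfk=\bfk_n+\bfk_g=k_n\,\bfN+k_g\,(\bfN\times{\bf t})$ of the acceleration in the Darboux frame --- is exactly what the paper recalls in Eqs.~\eqref{kn} and \eqref{normalcurvat}. Both of your implications (vanishing acceleration forces $k_n=k_g=0$; conversely $k_n=k_g=0$ forces vanishing acceleration) constitute the classical textbook argument, so on the regular part $\mathcal{C}^\ast$ your proof is complete and matches the approach the paper implicitly relies on.

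The one step that deserves more care is the extension across $\mathcal{C}\cap\mbox{Sing}_{\bfx}$. Writing ``by continuity of $\alpha''$'' presupposes that a single arc-length parameterization of all of $\mathcal{C}$ is $C^2$ at those exceptional points; but the curve may fail to be smooth precisely there (a cusp, say, where the unit tangent is discontinuous), and a real algebraic curve may even be disconnected, in which case there is no continuity path between different arcs of $\mathcal{C}^\ast$ and they could a priori lie on different lines. In the paper's algebraic setting the clean repair is Zariski closure rather than continuity: each connected arc of $\mathcal{C}^\ast$ is a line segment; the Zariski closure of a segment is the full line $L$ it spans; since $\mathcal{C}$ is an irreducible algebraic curve whose Zariski closure contains $L$ and both have dimension one, that closure equals $L$, so every point of $\mathcal{C}$ (including the finitely many points of $\mathcal{C}\cap\mbox{Sing}_{\bfx}$, and all connected components at once) lies on the single straight line $L$. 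With that substitution for the final extension step, your proof is complete.
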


We recall here some of the notions involved in the theorem above for the convenience of the reader. Let ${\mathcal C}\subset S$ be a parameterized curve contained in $S$, and let $\{{\bf t}, {\bf n}, {\bf b}\}$ denote the \emph{Frenet frame} of ${\mathcal C}$. Then the \emph{curvature} vector $\bfk$ of ${\mathcal C}$ is defined as $\bfk=\dot{\bf t}$, where the dot means the derivative with respect to the \emph{arc-length} parameter. Furthermore, the \emph{normal curvature}, $k_n$, and \emph{geodesic curvature}, $k_g$, are defined as 
\begin{equation}\label{kn}
k_n=\left\langle\dot{\bf t}, {\bfN}\right\rangle,\mbox{ }k_g=\left\langle\dot{\bf t},{\bfN}\times {\bf t}\right\rangle.
\end{equation}
Additionally, the normal curvature vector, $\bfk_n$, and the geodesic curvature vector, $\bfk_g$, are defined as 
\begin{equation}\label{normalcurvat}
\bfk_n=k_n\cdot \bfN,\mbox{ }\bfk_g=k_g\cdot (\bfN \times {\bf t}),
\end{equation}
so that $\bfk=\bfk_n+\bfk_g$. Then we say that ${\mathcal C}$ is an \emph{asymptotic line} of ${\mathcal C}$ if $k_n=0$, and we say that ${\mathcal C}$ is a \emph{geodesic line} if $k_g=0$. 

For cardinality reasons and since $\bfx$ parametrizes a surface, if ${\mathcal L}\subset S$ is a straight line covered by the parameterization $\bfx$, i.e. such that almost all points of ${\mathcal L}$ are the image of some point $(t,s)$ in the parameter space, the Zariski closure of $\bfx^{-1}(\mathcal{L})$ has dimension 1. In other words, there is an algebraic curve $p(t,s)=0$ in the $(t,s)$ plane whose image under $\bfx$ is ${\mathcal L}$. Conversely, consider a curve ${\mathcal C}\subset S$, described as the set of points $\bfx(t,s)$ with $p(t,s)=0$, where $p(t,s)$ is polynomial and square-free. Since $p(t,s)$ does not have multiple components, there exists a point $(t_0,s_0)$ where $p(t_0,s_0)=0$ and some partial derivative $p_t,p_s$ is nonzero. Applying the Implicit Function Theorem at $(t_0,s_0)$, we have that ${\mathcal C}$ can, at least locally, be parameterized as either $\bfx(t,s(t))$, where $s(t)$ is implicitly defined by $p(t,s)=0$ in the vicinity of $(t_0,s_0)$, or $\bfx(t_0,s)$. In the first case, if ${\mathcal C}$, parameterized by $\bfx(t,s(t))$, is not contained in $\mbox{Sing}_{\bfx}$ and $s(t)$ is a real function, then ${\mathcal C}$ is an asymptotic line of $S$ iff $s(t)$ satifies (see \cite[Ch.2, 5-13]{Struik})

\begin{equation} \label{asint2}
e^{\star}+2f^{\star}\cdot \frac{ds}{dt}+g^{\star}\cdot \left(\frac{ds}{dt}\right)^2=0.
\end{equation}

\noindent Additionally, in the same conditions ${\mathcal C}$ is a geodesic line of $S$ iff $s(t)$ satifies (see \cite[Ch.4, 2-3a]{Struik})
\begin{equation} \label{geod2}
\displaystyle{\bfI\cdot\frac{d^2s}{dt^2}=\widehat{\Gamma}^1_{22}\cdot \left(\frac{ds}{dt}\right)^3+\left(2\widehat{\Gamma}^1_{12}-\widehat{\Gamma}^2_{22}\right)\left(\frac{ds}{dt}\right)^2+\left(\widehat{\Gamma}^1_{11}-2\widehat{\Gamma}^2_{12}\right)\frac{ds}{dt}-\widehat{\Gamma}^2_{11},}
\end{equation}
where $\widehat{\Gamma}^i_{jk}=\Gamma^i_{jk}\cdot \bfI$, with $\bfI=EG-F^2$ (the determinant of the first fundamental form), and where the $\Gamma^i_{jk}$ are the {\it Christoffel symbols} of $\bfx(t,s)$. One can find explicit formulae for the Christoffel symbols, for instance, in page 268 of \cite{Patrikalakis}. Since $\bfx(t,s)$ is rational, the $\widehat{\Gamma}^i_{jk}$ are rational functions of $t,s$. 

As for a curve $\bfx({\bf c},s)$, with ${\bf c}$ a constant, considering analogous formulae to \eqref{asint2} and \eqref{geod2} in terms of $\frac{dt}{ds}$ instead of $\frac{ds}{dt}$, one can see that {such a curve} is an asymptotic line iff $g^{\star}=0$, and is a geodesic line iff $\widehat{\Gamma}^1_{22}=0$. Therefore, we have the following result, which will be useful in Section \ref{sec-comput}; here we denote 

\begin{equation}\label{eta}
\eta(t,s)=\gcd\left(\mbox{num}\left(g^{\star}(t,s)\right),\mbox{num}\left(\widehat{\Gamma}^1_{22}(t,s)\right)\right),
\end{equation}
where $\mbox{num}(\bullet)$ represents the numerator of $\bullet$.

\begin{lemma} \label{aux-str}
The curve $\bfx({\bf c},s)$, where ${\bf c}\in \mathbb{ R}$, is a straight line iff $t-{\bf c}$ divides $\eta(t,s)$.
\end{lemma}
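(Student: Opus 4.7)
My plan is to combine Theorem~\ref{known} with the two characterizations of ``asymptotic line'' and ``geodesic'' for curves of the form $\bfx({\bf c},s)$ noted in the paragraph just before the lemma, and then to translate the identical vanishing of a rational function along $t={\bf c}$ into polynomial divisibility by $t-{\bf c}$.

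First I would observe that the curve $\bfx({\bf c},s)$ has $dt/ds\equiv 0$. Substituting this into the analogues of \eqref{asint2} and \eqref{geod2} written in terms of $dt/ds$, only the summands containing $g^{\star}$ and $\widehat{\Gamma}^1_{22}$ survive, so those equations reduce to $g^{\star}({\bf c},s)\equiv 0$ and $\widehat{\Gamma}^1_{22}({\bf c},s)\equiv 0$; these are precisely the conditions recalled in the paragraph preceding the lemma. Applying Theorem~\ref{known}, and assuming first that $\bfx({\bf c},s)\not\subset\mbox{Sing}_{\bfx}$, the curve is a straight line iff both identical vanishings hold.

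Next I would translate these vanishing conditions into a statement about polynomial divisibility. A rational function $R(t,s)\in\mathbb{R}(t,s)$ satisfies $R({\bf c},s)\equiv 0$ iff $\mbox{num}(R)$ vanishes on the whole line $t={\bf c}$, which, $t-{\bf c}$ being irreducible in $\mathbb{R}[t,s]$, is equivalent to $(t-{\bf c})\mid\mbox{num}(R)$. Applying this to $R=g^{\star}$ and to $R=\widehat{\Gamma}^1_{22}$, and using that $t-{\bf c}$ divides a gcd iff it divides each of its arguments, I would obtain the announced equivalence with $(t-{\bf c})\mid\eta(t,s)$.

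The main subtlety I expect is the hypothesis of Theorem~\ref{known} that the curve not lie inside $\mbox{Sing}_{\bfx}$. If $\bfx({\bf c},s)\subset\mbox{Sing}_{\bfx}$, then $\bfx_t\times\bfx_s$ vanishes identically along $t={\bf c}$, forcing $g^{\star}$, $f^{\star}$, $e^{\star}$ and $\widehat{\Gamma}^1_{22}$ to vanish there as well; the algebraic side of the equivalence is then automatic, so to make the ``if'' direction of the lemma hold I would need to argue separately that in this degenerate case the image $\bfx({\bf c},s)$ is indeed a straight line, presumably invoking the standing properness assumption on $\bfx$ together with the fact that a one-parameter family of parameter-singular points on a proper rational parameterization collapses either to a point or to a line-like locus of the surface.
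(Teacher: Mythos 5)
Your first two paragraphs are precisely the paper's own argument. The paper in fact prints no proof of Lemma \ref{aux-str}: it is presented as an immediate consequence of the paragraph preceding it, where, writing the analogues of Eq. \eqref{asint2} and Eq. \eqref{geod2} in terms of $dt/ds$ and using that $dt/ds\equiv 0$ on a coordinate curve, one gets that $\bfx({\bf c},s)$ is an asymptotic line iff $g^{\star}({\bf c},s)\equiv 0$ and a geodesic iff $\widehat{\Gamma}^1_{22}({\bf c},s)\equiv 0$; combining this with Theorem \ref{known}, and with the observation that identical vanishing along the irreducible line $t={\bf c}$ is equivalent to $t-{\bf c}$ dividing the corresponding numerators, hence their $\gcd$ $\eta$, yields the statement. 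Up to this point your proof and the paper's coincide.

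The problem is your third paragraph. You are right that Theorem \ref{known} requires ${\mathcal C}\not\subset\mbox{Sing}_{\bfx}$, and right (although you only assert it) that along a singular coordinate line both relevant quantities vanish automatically: indeed $g^{\star}=\langle\bfx_{ss},\bfx_t\times\bfx_s\rangle$, and solving the Gauss equations by Cramer's rule gives $\widehat{\Gamma}^1_{22}=\langle\bfx_{ss},G\bfx_t-F\bfx_s\rangle=\langle\bfx_{ss},\bfx_s\times(\bfx_t\times\bfx_s)\rangle$, so both vanish wherever $\bfx_t\times\bfx_s$ does. But precisely because of this, the separate argument you propose cannot exist: the ``if'' direction is genuinely false for coordinate lines inside $\mbox{Sing}_{\bfx}$, properness notwithstanding. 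Take $\bfx(t,s)=(s,\,s^2+t^2,\,t^3)$, which is proper (invert via $s=x$, $t=z/(y-x^2)$). Then $\bfx_t\times\bfx_s=(-6st^2,\,3t^2,\,-2t)$ vanishes exactly along $t=0$, and one computes $g^{\star}=6t^2$ and $\widehat{\Gamma}^1_{22}=4t$, so $\eta(t,s)=t$ and $t-0$ divides $\eta$; yet $\bfx(0,s)=(s,s^2,0)$ is a parabola, not a straight line. This also refutes your heuristic that a one-parameter family of parameter-singular points of a proper parameterization must collapse to a point or to a ``line-like'' locus. The correct repair is the one the paper makes tacitly: Lemma \ref{aux-str} inherits the standing hypothesis of the paragraph it follows (forced anyway by the definitions of asymptotic line and geodesic, which require $\bfN$), namely $\bfx({\bf c},s)\not\subset\mbox{Sing}_{\bfx}$; straight lines contained in $\mbox{Sing}_{\bfx}$ are not claimed by this lemma and are recovered elsewhere in the algorithm, through the factor $\delta_4$ of Subsection \ref{subsec-spec}. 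So keep your first two paragraphs, state the non-singularity hypothesis explicitly, and drop the attempted extension.
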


Finally, we recall that a surface $S$ is said to be {\it ruled} if at every point $P\in S$ there exists a (real or complex) straight line $L_P$ through $P$ contained in $S$.

\section{Computation of the straight lines.} \label{sec-comput}


In this section we describe the algorithm for computing the straight lines contained in the surface $S$ that are covered by the parameterization $\bfx(t,s)$. Results on the computation of the curves contained in a rational surface not covered by the parameterization, or methods for surjectively reparameterizing rational surfaces, can be found in \cite{Bajaj, Chou} for special kinds of surfaces, and also in \cite{PSV, SSV}. Observe that in any case, the set of points not covered by the parameterization $\bfx(t,s)$ has at most dimension 1. The algorithm we present in this section is inspired in Theorem \ref{known}. In fact, the correctness of the algorithm, for the case of real straight lines, follows from Theorem \ref{known}. However, in Subsection \ref{subsec-complex} we justify that the same ideas lead to the computation of the complex straight lines, too. 

\vspace{2 mm}

\noindent \underline{Overview of the section:}

\vspace{2 mm}

First, in Subsection \ref{subsec-develop} we consider the computation of the straight lines not contained in $\mbox{Sing}_{\bfx}$, that can be locally parameterized as $\bfx(t,s(t))$, where $s(t)$ satisfies both Eq. \eqref{asint2} and \eqref{geod2} (see Section \ref{sec-prelim.}). We address this computation under the most general conditions, leaving special situations for a later subsection. The main idea is as follows: Eq. \eqref{asint2} and Eq. \eqref{geod2} provide conditions involving $t,s,\frac{ds}{dt},\frac{d^2s}{dt^2}$ for the function $s(t)$. Our strategy then is to move from these {\it differential} conditions to {\it algebraic} conditions. In order to do this, we replace $\frac{ds}{dt},\frac{d^2s}{dt^2}$ in Eq. \eqref{asint2} and Eq. \eqref{geod2} by two new variables $\omega:=\frac{ds}{dt}$, $\gamma:=\frac{d^2s}{dt^2}$, linked by the relationship $\gamma=\frac{d\omega}{dt}$. {An extra condition involving $\omega,\gamma$} is obtained by differentiating Eq. \eqref{asint2} with respect to $t$, and using that $\gamma=\frac{d\omega}{dt}$. Then we employ algebraic methods, namely resultants, to eliminate the variables $\omega,\gamma$, and we derive a polynomial condition $\xi(t,s)=0$. The curves, contained in $S$, parameterized as
\begin{equation}\label{forma}
\bfx(t,s),\mbox{ }\mu_i(t,s)=0,
\end{equation}
where $\mu_i(t,s)$ is an irreducible factor of $\xi(t,s)$, are then potential {\it candidates} for straight lines contained in $S$. However, not all these curves are straight lines. A criterion to distinguish those ones {which are straight lines from those ones which are not}, is given in Subsection \ref{checking}. 

In Subsection \ref{subsec-spec} we address the computation of the straight lines corresponding to some special situations, left aside in Subsection \ref{subsec-develop}. In particular, here we include straight lines contained in $\mbox{Sing}_{\bfx}$, as well as straight lines parameterized as $\bfx({\bf c},s)$, with ${\bf c}$ a constant. The whole algorithm is given in Subsection \ref{subsec-alg-rat}. Next, in Subsection \ref{subsec-complex} we prove why the algorithm provides not only the real, but also the complex straight lines contained in the surface. Finally, a detailed example is presented in Subsection \ref{detail-ex}.

\subsection{Development of the method.} \label{subsec-develop}

Let us replace $\omega:=\frac{ds}{dt}$ in Eq. \eqref{asint2}. This way we get 
\begin{equation}\label{as-rat}
e^{\star}+2f^{\star}\cdot \omega+g^{\star}\cdot \omega^2=0,
\end{equation}
where $e^{\star},f^{\star},g^{\star}$ are given in Eq. \eqref{star}, and are assumed to be evaluated at $(t,s(t))$, where the function $s=s(t)$ is unknown; for simplicity, however, in the rest of the section we will write $s$ instead of $s(t)$. Differentiating Eq. \eqref{as-rat} with respect to the variable $t$, and introducing the variable $\gamma:=\frac{d\omega}{dt}$, we get

\begin{equation} \label{diff1}
A(t,s,\omega)\cdot \gamma+B(t,s,\omega)=0,
\end{equation}
where 

\begin{equation} \label{EqA}
A(t,s,\omega)=2(f^{\star}+g^{\star}\omega)
\end{equation}
and 
\begin{equation}\label{EqB}
B(t,s,\omega)=g^{\star}_s\cdot \omega^3+(2f^{\star}_s+g^{\star}_t)\omega^2+(2f^{\star}_t+e^{\star}_s)\cdot \omega+e^{\star}_t.
\end{equation}

Moreover, if ${\mathcal C}$ is also a geodesic of $S$ then from Eq. \eqref{geod2} and using the variables $\omega:=\frac{ds}{dt}$, $\gamma:=\frac{d\omega}{dt}=\frac{d^2s}{dt^2}$, we have 
\begin{equation} \label{diff2}
\bfI\cdot \gamma=\widehat{\Gamma}^1_{22}\omega^3+(2\widehat{\Gamma}^1_{12}-\widehat{\Gamma}^2_{22})\omega^2+(\widehat{\Gamma}^1_{11}-2\widehat{\Gamma}^2_{12})\omega-\widehat{\Gamma}^2_{11}.
\end{equation}

If $f^{\star}$ and $g^{\star}$ are not both identically $0$ (the special case when both $f^{\star},g^{\star}$ are zero will be treated later), $f^{\star}+g^{\star}\omega$, seen as a polynomial in $\omega$, is not identically $0$ either. Then from Eq. \eqref{diff1} we can write $\gamma=-B(t,s,\omega)/A(t,s,\omega)$. Substituting this expression into Eq. \eqref{diff2} and clearing denominators we get a polynomial relationship between $t,s,\omega$,   
\begin{equation} \label{diff3}
\tilde{N}(t,s,\omega)=0.
\end{equation}

\noindent Let $\tilde{M}(t,s,\omega)$ be the polynomial obtained by clearing denominators in Eq. \eqref{as-rat}, and let $M(t,s,\omega)$, $N(t,s,\omega)$ be the primitive parts of $\tilde{M}(t,s,\omega)$ and $\tilde{N}(t,s,\omega)$ with respect to $\omega$. Finally, let $\tilde{\xi}(t,s)$ be the resultant of $M(t,s,\omega)$ and $N(t,s,\omega)$ with respect to $\omega$, and let $\xi(t,s)$ be the result of removing from $\tilde{\xi}(t,s)$ the denominators of the components of $\bfx(t,s)$ and the denominators of the rational functions in Eq. \eqref{as-rat}, \eqref{diff1}, \eqref{diff2}, if any. 

Notice that $\xi(t,s)=0$ implicitly defines $s=s(t)$. Therefore, the components of the space curve defined by $\bfx(t,s)$, where $\xi(t,s)=0$, are potential candidates for straight lines contained in $S$. Each of these components can be described as 
\begin{equation}\label{components-curve}
\bfx(t,s),\mbox{ }\mu_i(t,s)=0,
\end{equation}
where $\mu_i(t,s)$ is an irreducible factor of $\xi(t,s)$. If we only want to find the real straight lines contained in $S$ we need irreducible factors over the reals (not only over ${\Bbb Q}$); if we want the real and complex straight lines, we need the irreducible factors over the complex, i.e. an absolute factorization of the polynomial $\xi(t,s)$.

\noindent\begin{remark} \label{components} There is a number of papers where the problem of factoring over the reals is addressed, see for instance \cite{Che,CGKW,Gao,Kal85,Kal08}. In our case, we used the command {\tt AFactors} of Maple 18, which works finely for polynomials of moderate and medium degrees. The same command also allows to factor over the complex numbers. 
\end{remark}

\subsection{Checking candidates.} \label{checking}

Not every every curve \eqref{components-curve} gives rise to a straight line contained in $S$. This is not contradictory, since in the process to compute the polynomial $\xi(t,s)$ we treat $\omega$ as a variable independent of $t,s$, without taking into account the (differential) relationship $\omega=\frac{ds}{dt}$. Therefore, we need a criterion to distinguish those real factors of $\xi(t,s)$ which give rise to straight lines contained in $S$, from those which do not. In order to do this, we use the following result. 

\noindent\begin{proposition} \label{check-line}
Let ${\mathcal C}$ be the set of points of $\mathbb{C}^3$ parameterized by $\bfx(t,s)$ with $\alpha(t,s)=0$, where $\alpha(t,s)$ is irreducible over the complex and depends explicitly on $s$. Then ${\mathcal C}$ corresponds to a straight line if and only if there exists a rational function $\lambda(t,s)$ and a constant vector $\bfu$ such that the following equality 
\begin{equation}\label{condi}
(x_t\alpha_s-x_s\alpha_t,y_t\alpha_s-y_s\alpha_t,z_t\alpha_s-z_s\alpha_t)=\lambda(t,s)\cdot \bfu
\end{equation}
holds modulo $\alpha(t,s)$.
\end{proposition}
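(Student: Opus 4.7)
The key observation that drives the proof is geometric: the vector on the left-hand side of \eqref{condi} is, up to a scalar factor, the tangent vector of $\mathcal{C}$ at any regular point. Indeed, since $\alpha$ is irreducible over $\mathbb{C}$ and depends explicitly on $s$, the derivative $\alpha_s$ does not vanish identically on $\{\alpha = 0\}$ (otherwise $\alpha$ would divide $\alpha_s$, which is impossible by degree in $s$). So at every point of $\mathcal{C}$ outside a finite set, $\alpha_s \neq 0$ and the Implicit Function Theorem yields a local parameterization $\bfx(t, s(t))$ with $s'(t) = -\alpha_t/\alpha_s$. The tangent vector is then
\[
\frac{d}{dt}\bfx(t,s(t)) = \bfx_t + s'(t)\bfx_s = \frac{1}{\alpha_s}\bigl(\alpha_s\bfx_t - \alpha_t\bfx_s\bigr),
\]
which is precisely $1/\alpha_s$ times the vector appearing on the left of \eqref{condi}.

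For the ``only if'' direction, I would assume $\mathcal{C}$ is a straight line, so $\mathcal{C}$ admits a parameterization of the form $\bfp_0 + \sigma\,\bfu$ for some constant direction vector $\bfu \in \mathbb{C}^3$. The tangent vector at every point is therefore parallel to $\bfu$, so there is a scalar function $\mu(t)$ with $\bfx_t + s'(t)\bfx_s = \mu(t)\,\bfu$ along $\mathcal{C}$. Multiplying by $\alpha_s$ gives
\[
\alpha_s \bfx_t - \alpha_t \bfx_s = \mu(t)\,\alpha_s\,\bfu \qquad \text{on } \{\alpha = 0\},
\]
so \eqref{condi} holds modulo $\alpha$ with $\lambda(t,s) := \mu(t)\,\alpha_s(t,s)$, which is rational (actually polynomial in the relevant quantities). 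Passing from ``along $\mathcal{C}$'' to ``modulo $\alpha$'' is justified because $\alpha$ is irreducible, so the ideal of $\mathcal{C}$ in $\mathbb{C}[t,s]$ is exactly $(\alpha)$.

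For the ``if'' direction, I would assume \eqref{condi} holds modulo $\alpha$ with $\lambda$ rational and $\bfu$ constant. Restricting to the curve $\mathcal{C}$ and dividing by $\alpha_s$ (valid away from the finitely many zeros of $\alpha_s$ on $\mathcal{C}$) gives
\[
\bfx_t + s'(t)\,\bfx_s = \frac{\lambda(t,s(t))}{\alpha_s(t,s(t))}\,\bfu.
\]
Thus the tangent to $\mathcal{C}$ has constant direction $\bfu$ at every regular point; since $\mathcal{C}$ is a connected (irreducible) analytic curve with constant tangent direction on a dense subset, it must be a straight line. Finally, extending to the finite set where $\alpha_s = 0$ is done by continuity/Zariski closure.

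The main potential obstacle is technical rather than conceptual: one has to make the ``modulo $\alpha$'' condition precise in both directions, ensuring on the $(\Rightarrow)$ side that $\mu(t)\,\alpha_s$ can legitimately be viewed as a bivariate rational function in $(t,s)$, and on the $(\Leftarrow)$ side that the exceptional loci where $\alpha_s = 0$ or $\lambda$ has poles do not spoil the conclusion. Irreducibility of $\alpha$ and the fact that $\alpha$ depends on $s$ are the two ingredients that control these exceptional sets and allow both arguments to go through cleanly.
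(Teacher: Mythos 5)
Your proof is correct and follows essentially the same route as the paper's: both identify the left-hand side of \eqref{condi} as (a scalar multiple of) the tangent vector via the Implicit Function Theorem with $\frac{ds}{dt}=-\alpha_t/\alpha_s$, and then read the condition modulo $\alpha$ as saying all tangents to ${\mathcal C}$ are parallel to one fixed direction $\bfu$. The paper's proof is just a terser version of yours; your extra care about where $\alpha_s$ vanishes and about passing between ``on ${\mathcal C}$'' and ``modulo $\alpha$'' (via irreducibility of $\alpha$) fills in details the paper leaves implicit.
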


\noindent\begin{proof} If $\alpha(t,s)$ depends on $s$, then from the Implicit Function Theorem $\alpha(t,s)=0$ (locally) defines $s=s(t)$. Furthermore, $\frac{ds}{dt}=-\alpha_t/\alpha_s$. Using this, the tangent vector to a point of ${\mathcal C}$ is parallel to
\[(x_t\alpha_s-x_s\alpha_t,y_t\alpha_s-y_s\alpha_t,z_t\alpha_s-z_s\alpha_t).\]Then the condition in the statement of the lemma is equivalent to all the tangents to ${\mathcal C}$ being parallel to a same direction. 
\end{proof}

In order to check condition \eqref{condi}, one proceeds in the following way for each irreducible factor $\mu_i(t,s)$ of $\xi(t,s)$ (in general, over ${\Bbb C}$), not a common factor\footnote{In such a case, the space curve defined by $\bfx(t,s)$ with $\mu_i(t,s)=0$ degenerates into a point.} of the components of the left hand-side of \eqref{condi}, taking $\alpha(t,s)\equiv \mu_i(t,s)$ for each value of $i$:

\begin{itemize}
\item [1.] Let $t=a\in \mathbb{ Z}$. Then $(a,b)$, where $m(b)=\alpha(a,b)=0$, is a point of the curve $\alpha(t,s)=0$.
\item [2.] Let $\bfw(t,s)=(x_t\alpha_s-x_s\alpha_t,y_t\alpha_s-y_s\alpha_t,z_t\alpha_s-z_s\alpha_t)$, and let $\bfw_0$ be the result of evaluating $\bfw(t,s)$ at the point $t=a$, $s=b$. If $\bfw_0$ is zero, we choose a different $(a,b)$. 
\item [3.] ${\mathcal C}$ is a straight line iff all the components of $\bfw(t,s)\times \bfw_0$ are divisible by $\alpha(t,s)$. Furthermore, in the affirmative case ${\mathcal C}$ is parallel to $\bfw_0$.
\end{itemize}

\subsection{Remaining straight lines.} \label{subsec-spec}

We need to pursue now some additional straight lines that may have been missed in the process described in Subsection \ref{subsec-develop}, including the straight lines contained in $\mbox{Sing}_{\bfx}$. Assume first, as we did in Subsection \ref{subsec-develop}, that $f^{\star},g^{\star}$ are not identically zero. Then we need to examine the lines parameterized by $\bfx(t,s)$ with $\delta_j(t,s)=0$, $j=1,\ldots,4$, where: 
\begin{itemize}
\item [(1)] $\delta_1(t,s)$ is the $\gcd$ of the coefficients of the numerator of $A(t,s,\omega)$, seen as a polynomial of degree 1 in $\omega$. 
\item [(2)] $\delta_2(t,s)$ is the product of the contents of $M(t,s,\omega)$ and $N(t,s,\omega)$ with respect to $\omega$; notice that these contents were eliminated when we moved from $\tilde{M},\tilde{N}$ to $M,N$. 
\item [(3)] In order to compute the straight lines, if any, consisting of points $\bfx(t,s)$ where $f^{\star}+g^{\star}\omega=0$, we observe that in this case we have 
\[\omega=-f^{\star}/g^{\star}.\]
Substituting this expression in Eq. \eqref{as-rat} and clearing the denominator yields the condition
\[\delta_3(t,s)=e^{\star} g^{\star}-\left(f^{\star}\right)^2=0.
\]
Notice that $\delta_3(t,s)$ is the determinant of the second fundamental form multiplied by $\Vert \bfN\Vert^2$ (see Eq. \eqref{forms}). 
\item [(4)] {\it Straight lines contained in $\mbox{Sing}_{\bfx}$}: we define $\delta_4(t,s)$ as the numerator of $\Vert \bfx_t\times \bfx_s\Vert^2$. Notice that at the points where $\bfx(t,s)$ is singular, we have $\Vert \bfx_t\times \bfx_s\Vert^2=0$ .
\end{itemize}

For each curve $\bfx(t,s)$, with $\delta_i(t,s)=0$, $i=1,\ldots,4$, we can also use Proposition \ref{check-line} in order to check whether or not the curve corresponds to a straight line. Additionally, in the case when $f^{\star},g^{\star}$ are not both identically zero we also need to compute the straight lines parameterized as $\bfx({\bf c},s)$, where ${\bf c}$ is a constant. These straight lines are found by applying Lemma \ref{aux-str}.

Finally, we address the special case when $f^{\star},g^{\star}$ are both identically 0. In this situation $e^{\star}$ cannot be identically $0$, because otherwise $S$ is a plane (see pg. 147 of \cite{Docarmo}), which is excluded by hypothesis. Hence \eqref{as-rat} reduces to $e^{\star}(t,s)=0$. If $e^{\star}(t,s)=e^{\star}(t)$ then we just need to look for lines that can be parameterized as $\bfx({\bf c},s)$ with ${\bf c}$ a root of $e^{\star}(t)$, which can be done by applying Lemma \ref{aux-str}. Otherwise, $e^{\star}(t,s)=0$ implicitly defines $s=s(t)$; differentiating $e^{\star}(t,s)=0$ twice with respect to $t$, and using the variables $\omega:=\frac{ds}{dt}$, $\gamma:=\frac{d\omega}{dt}$ as before, we get an expression like Eq. \eqref{diff1}, with 
\begin{equation}\label{newAB}
A(t,s,\omega)=e^{\star}_s,\mbox{ }B(t,s,\omega)=e^{\star}_{ss}\cdot \omega^2+2e^{\star}_{ts}\cdot \omega+e^{\star}_{tt}.
\end{equation}

\noindent Now writing $\gamma=-B(t,s,\omega)/A(t,s,\omega)$, we can proceed as in Subsection \ref{subsec-develop} to find an expression like Eq. \eqref{diff3}. From this moment on the situation is analogous to that in Subsection \ref{subsec-develop}, and a polynomial $\xi(t,s)$ is computed. In this case we also have to examine the lines parameterized as $\bfx(t,s)$, $\delta_i(t,s)=0$, where $i=1,\ldots,4$ and the polynomials $\delta_1(t,s)$, $\delta_2(t,s)$ and $\delta_4(t,s)$ are computed as before. No polynomial analogous to $\delta_3(t,s)$ appears now, though, so in this case we define $\delta_3(t,s):=1$.

\subsection{The whole algorithm.} \label{subsec-alg-rat}

From the ideas and results in the previous subsections we can derive the following algorithm, Algorithm {\tt STLines}, to compute the straight lines contained in $S$. The algorithm requires that the surface $S$ defined by $\bfx(t,s)$ is not ruled. Ruled surfaces of degree higher than two can be characterized by means of the {\it Pick's invariant} of the surface, which is defined as $J=K-H$, where $K$ is the {\it Gauss' curvature} of the surface and $H$ is the {\it mean curvature} of the surface. Ruled surfaces that are not quadrics are exactly those ones with vanishing $J$ (see pages 89, 90 of \cite{Nomizu}). On the other hand, every quadric contains infinitely many complex lines, so every quadric can be regarded as a ruled surface. However, in Section \ref{term} we will see that in fact we do not need to check whether or not $S$ is ruled in advance, since ruled surfaces can be recognized while we are running the algorithm. 

Notice that the algorithm has two loops, in steps 14 and 19. The number of iterations in each of these loops equals the number of factors of the polynomials $\mu(t,s)$ and $\eta(t,s)$, computed in the steps 13 and 18, respectively. Therefore, in order to show that the algorithm works properly we need to show that under the considered hypotheses $\mu(t,s),\eta(t,s)$ are not identically zero; otherwise the number of factors, and therefore of iterations in the loops of either step 14 or step 19, would be infinite. We will prove this in Section \ref{term}.

\begin{algorithm}
\begin{algorithmic}[1]
\REQUIRE A proper parameterization $\bfx(t,s)$ of an algebraic surface $S$, which is not ruled.  
\ENSURE The real and complex straight lines contained in $S$ that are covered by the parameterization $\bfx(t,s)$.
\STATE \emph{{\bf Part I:} Straight lines \emph{not} of the type $\bfx({\bf c},s)$, where ${\bf c}$ is a constant.}
\STATE{find the numerator $\tilde{M}(t,s,\omega)$ of the left hand-side of \eqref{as-rat}.  }
\IF{$f^{\star}, g^{\star}$ are not both identically zero}
\STATE{compute $A(t,s,\omega),B(t,s,\omega)$ using Eq. \eqref{EqA} and Eq. \eqref{EqB}.}
\ELSE
\STATE{compute $A(t,s,\omega),B(t,s,\omega)$ using Eq. \eqref{newAB}.}
\ENDIF
\STATE{substitute $\gamma=-B(t,s,\omega)/A(t,s,\omega)$ into Eq. \eqref{diff2} and clear denominators to get $\tilde{N}(t,s,\omega)$. }
\STATE{compute the primitive parts $M(t,s,\omega)$, $N(t,s,\omega)$ of $\tilde{M}(t,s,\omega)$ and $\tilde{N}(t,s,\omega)$ with respect to $\omega$.}
\STATE{compute $\tilde{\xi}(t,s)=\mbox{Res}_{\omega}(M(t,s,\omega),N(t,s,\omega))$.}
\STATE{let $\xi(t,s)$ be the result of removing from $\tilde{\xi}(t,s)$ the denominators of the components of $\bfx(t,s)$, and the denominators of the rational functions in Eqs. \eqref{as-rat}, \eqref{diff1}, \eqref{diff2}, if any. }
\STATE{find the polynomials $\delta_1(t,s)$, $\delta_2(t,s)$, $\delta_3(t,s)$, $\delta_4(t,s)$ defined in Subsec. \ref{subsec-spec}.}
\STATE{compute $\mu(t,s)$, the square-free part of $\mu^{\star}(t,s)=\xi(t,s)\cdot \delta_1(t,s)\cdot \delta_2(t,s)\cdot \delta_3(t,s)\cdot \delta_4(t,s)$.}
\FOR{each irreducible component of $\mu(t,s)=0$ over the complex,}
\STATE{use Proposition \ref{check-line} to check whether or not it corresponds to a straight line contained in $S$.}
\ENDFOR
\STATE{\emph{{\bf Part II:} Straight lines of the type $\bfx({\bf c},s)$, where ${\bf c}$ is a constant.}}
\STATE{let $\eta(t,s)=\gcd\left(\mbox{num}(g^{\star}(t,s)),\mbox{num}\left(\widehat{\Gamma}^1_{22}(t,s)\right)\right)$. }
\FOR{each factor $t-{\bf c}_i$ of $\eta(t,s)$, only depending on $t$}
\STATE{compute the line $\bfx({\bf c}_i,s)$, $i=1,\ldots,n$.}
\ENDFOR
\STATE{{\bf return} the list of straight lines computed in Part I, Part II, or the message {\tt no straight lines found}.}
\end{algorithmic}
\caption{{\tt STLines}}
\end{algorithm}

\subsection{Why we also get the complex straight lines.}\label{subsec-complex}

In this subsection we show that, whenever we compute an absolute factorization of the polynomials $\mu(t,s)$ and $\eta(t,s)$ in steps 13 and 18 of Algorithm {\tt STLines}, i.e. a factorization over the complex numbers, Algorithm {\tt STLines} also provides the complex straight lines contained in $S$. The fact that Algorithm {\tt STLines} also provides the complex straight lines contained in $S$ is not necessarily obvious: although these lines can be parameterized as $\bfx(t,s(t))$ or $\bfx({\bf c},s)$, in this case $s(t)$, ${\bf c}$ are complex, while the results in Section \ref{sec-prelim.}, and therefore Theorem \ref{known} too, assume that $s(t)$ or $t(s)$ are real functions. Therefore, we need to see that Eq. \eqref{asint2} and Eq. \eqref{geod2} in Section \ref{sec-prelim.} also hold when $s(t),{\bf c}$ are complex. 

In order to do this, we need to look closer at the notions of {\it normal curvature} $k_n$, {\it normal curvature vector} $\bfk_n$, {\it geodesic curvature} $k_g$,  and {\it geodesic curvature vector} $\bfk_g$, that we recalled in Section \ref{sec-prelim.}. For the curves $\bfx(t,s(t))$ or $\bfx(t(s),s)$, with $s(t),t(s)$ real functions, the conditions $k_n=0$ and $k_g=0$ are equivalent to Eq. \eqref{asint2} and Eq. \eqref{geod2}. However, when $s(t),t(s)$ are complex, $\bfk_n,\bfk_g$ are not necessarily well-defined. The reason is that $\langle\mbox{ , }\rangle$ is not an inner product in $\mathbb{C}^3$, since the positive-definiteness property does not hold anymore. Because of this, the normal vector $\bfN$ is not necessarily well-defined even though $\bfx(t,s)$, and therefore $\bfx_t, \bfx_s$, are well-defined, because $\Vert \bfx_t\times \bfx_s\Vert$ can vanish although $\bfx_t\times \bfx_s$ is nonzero. Also, if we have a space curve parameterized by $\bfy(t)$, it can happen that $\bfy'(t)\neq 0$ but $\Vert \bfy'(t)\Vert =0$, in which case the unitary tangent ${\bf t}$ is not well-defined either. 

In order to see that complex straight lines satisfy Eq. \eqref{asint2} and Eq. \eqref{geod2} too, we need the following lemmata, the first of which can be easily proven. Here, ${\mathcal L}(\bfu,\bfv)$ represents the linear variety spanned by the vectors $\bfu,\bfv$.

\begin{lemma} \label{eq1}
Let ${\mathcal C}\subset S$ be parameterized by $\bfy(t)=\bfx(t,s(t))$ for some real function $s(t)$.
\begin{itemize}
\item [(1)] The condition $k_n=0$ is equivalent to $\bfy''\in {\mathcal L}(\bfx_t,\bfx_s)$, where $\bfy''$ is evaluated at $t$, and $\bfx_t,\bfx_s$ at $(t,s(t))$. 
\item [(2)] The condition $k_g=0$ is equivalent to $\langle\bfy'',(\bfx_t\times \bfx_s)\times \bfy'\rangle=0$, where $\bfy',\bfy''$ are evaluated at $t$, and $\bfx_t,\bfx_s$ at $(t,s(t))$. 
\end{itemize}
\end{lemma}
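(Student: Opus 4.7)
The plan is to express $\dot{\bf t}$ explicitly in terms of $\bfy'$ and $\bfy''$, and then show that the conditions $k_n=0$ and $k_g=0$ collapse to the stated geometric conditions once the scalar factors arising from the arc-length reparametrization are stripped away.

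First I would write the unit tangent as ${\bf t}=\bfy'/\Vert\bfy'\Vert$ and use $\frac{d}{d\sigma}=\frac{1}{\Vert\bfy'\Vert}\frac{d}{dt}$ (where $\sigma$ denotes arc-length) to compute
\[
\dot{\bf t}=\frac{\bfy''}{\Vert\bfy'\Vert^{2}}-\frac{\langle\bfy',\bfy''\rangle}{\Vert\bfy'\Vert^{4}}\,\bfy'.
\]
Since $\bfy'=\bfx_t+s'(t)\bfx_s\in{\mathcal L}(\bfx_t,\bfx_s)$, the vector $\bfy'$ is orthogonal to $\bfN$. Hence the second summand above contributes nothing to $k_n=\langle\dot{\bf t},\bfN\rangle$, giving
\[
k_n=\frac{1}{\Vert\bfy'\Vert^{2}}\langle\bfy'',\bfN\rangle.
\]
Because $\Vert\bfy'\Vert\neq 0$ (the curve is regular, so $\bfy(t)$ is a proper real parametrization), $k_n=0$ is equivalent to $\langle\bfy'',\bfN\rangle=0$. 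As $\bfN$ spans the orthogonal complement of ${\mathcal L}(\bfx_t,\bfx_s)$, this in turn is equivalent to $\bfy''\in{\mathcal L}(\bfx_t,\bfx_s)$, which proves (1).

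For (2) I would proceed analogously, observing that $\bfN\times{\bf t}$ is orthogonal to ${\bf t}$, so it is also orthogonal to $\bfy'$; therefore the $\bfy'$-summand in the expression for $\dot{\bf t}$ again drops out of $k_g=\langle\dot{\bf t},\bfN\times{\bf t}\rangle$. After substituting ${\bf t}=\bfy'/\Vert\bfy'\Vert$ and $\bfN=(\bfx_t\times\bfx_s)/\Vert\bfx_t\times\bfx_s\Vert$ one gets
\[
k_g=\frac{1}{\Vert\bfy'\Vert^{3}\,\Vert\bfx_t\times\bfx_s\Vert}\,\langle\bfy'',(\bfx_t\times\bfx_s)\times\bfy'\rangle,
\]
and the scalar factor in front is nonzero under the standing regularity hypotheses, yielding the stated equivalence.

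The only real subtlety is housekeeping: one must check that the scalar factors $\Vert\bfy'\Vert$ and $\Vert\bfx_t\times\bfx_s\Vert$ we are dividing by are genuinely nonzero. In the present real setting this is immediate from regularity of the parametrization and of the curve, so the argument goes through directly; this is the step that will fail in the complex setting and is precisely the reason the authors need the subsequent lemma to lift the characterization to $\mathbb{C}$.
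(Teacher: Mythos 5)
Your proof is correct, and it fills in exactly the computation the authors omit: the paper states this lemma without proof (it is introduced as one ``which can be easily proven''), and the intended argument is precisely yours --- write $\dot{\bf t}$ via the arc-length chain rule, kill the $\bfy'$-summand using tangency ($\langle\bfy',\bfN\rangle=0$ for $k_n$, $\langle\bfy',\bfN\times{\bf t}\rangle=0$ for $k_g$), and strip the nonzero scalar factors. Your closing remark is also on target: the nonvanishing of $\Vert\bfy'\Vert$ and $\Vert\bfx_t\times\bfx_s\Vert$ is exactly what breaks over $\mathbb{C}$, which is why the paper needs Lemma \ref{rectacom} and the subsequent discussion of special curves in Subsection \ref{subsec-complex}.
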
 


\begin{lemma} \label{rectacom}
Let $\bfy(t)=\bfx(t,s(t))$ be a complex straight line (i.e. $s(t)$ is a complex function such that the imaginary part of $\bfy(t)$ is nonzero) contained in $S$. Then, $\bfy(t)$ satisfies that: (1) $\bfy''\in {\mathcal L}(\bfx_t,\bfx_s)$; (2) $\langle\bfy'',(\bfx_t\times \bfx_s)\times \bfy'\rangle=0$.
\end{lemma}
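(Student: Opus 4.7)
My plan is to exploit directly the linear structure of a straight line. Since $\bfy(t)=\bfx(t,s(t))$ traces a line in $\mathbb{C}^3$, I can write
\[
\bfy(t)=\bfp+h(t)\,\bfd
\]
for some constant vectors $\bfp,\bfd\in\mathbb{C}^3$ (with $\bfd\neq 0$) and a scalar function $h(t)$ which is nonconstant (otherwise the imaginary part of $\bfy$ would vanish, contradicting the hypothesis). Differentiating, $\bfy'(t)=h'(t)\bfd$ and $\bfy''(t)=h''(t)\bfd$. In particular, both $\bfy'$ and $\bfy''$ lie in the line $\mathbb{C}\cdot\bfd$, so $\bfy''(t)=\lambda(t)\bfy'(t)$ at every $t$ where $h'(t)\neq 0$ (with $\lambda=h''/h'$), and the two vectors are proportional as rational vector-valued functions of $t$.

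For (1): by the chain rule,
\[
\bfy'(t)=\bfx_t(t,s(t))+s'(t)\bfx_s(t,s(t))\in\mathcal{L}(\bfx_t,\bfx_s).
\]
Since $\bfy''$ is a scalar multiple of $\bfy'$ at generic $t$, we obtain $\bfy''\in\mathcal{L}(\bfx_t,\bfx_s)$ at generic $t$, and this extends to an identity by continuity (or by noting that the claim is an equality of rational expressions in $t$, so isolated zeros of $h'$ do not matter).

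For (2): the key observation is that the algebraic identity $\langle\bfv,\bfu\times\bfv\rangle=0$ is polynomial in the components of $\bfu,\bfv$, and therefore holds for the bilinear extension of the Euclidean form to $\mathbb{C}^3$, even though this form is no longer positive-definite. Applying this identity with $\bfu=\bfx_t\times\bfx_s$ and $\bfv=\bfy'$, and using $\bfy''=\lambda\,\bfy'$, we get
\[
\langle\bfy'',(\bfx_t\times\bfx_s)\times\bfy'\rangle=\lambda\,\langle\bfy',(\bfx_t\times\bfx_s)\times\bfy'\rangle=0.
\]

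I do not expect any substantive obstacle: the only point requiring care is the one flagged in the text immediately preceding the lemma, namely that $\langle\cdot,\cdot\rangle$ is no longer an inner product over $\mathbb{C}$. The proof sidesteps this issue by relying only on the formal cross-product/dot-product identity $\langle\bfv,\bfu\times\bfv\rangle\equiv 0$, which is a polynomial identity in the coordinates and thus valid over any field of characteristic zero. Normal and geodesic curvature vectors need not be well-defined for the complex line, but the two conditions in the conclusion of the lemma are purely algebraic consequences of $\bfy''$ and $\bfy'$ being colinear with the direction vector $\bfd$ of the line.
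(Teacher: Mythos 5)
Your proof is correct and takes essentially the same approach as the paper's: both arguments rest on the fact that $\bfy'$ and $\bfy''$ are proportional along a straight line, combined with the chain rule for (1) and, for (2), the vanishing of the triple product $\langle\bfy'',(\bfx_t\times \bfx_s)\times \bfy'\rangle$ viewed as a determinant with proportional rows --- a polynomial identity that remains valid over $\mathbb{C}$ despite $\langle\,,\rangle$ not being an inner product there. Your explicit parameterization $\bfy(t)=\bfp+h(t)\,\boldsymbol{d}$ and the care taken at points where $h'(t)=0$ merely make concrete the paper's brief linear-dependence argument.
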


\begin{proof} (1) $\bfy(t)=\bfx(t,s(t))$ is a straight line iff $\bfy'(t),\bfy''(t)$ are linearly dependent for all $t$. Since $\bfy'\in {\mathcal L}(\bfx_t,\bfx_s)$, it follows that $\bfy''\in {\mathcal L}(\bfx_t,\bfx_s)$ too. Therefore, condition (1) holds. (2) The triple product $\langle\bfy'',(\bfx_t\times \bfx_s)\times \bfy'\rangle$ can be written as a determinant, where the first and last rows correspond to the coordinates of $\bfy''$ and $\bfy'$ respectively. Since these rows are proportional, the value of the determinant is zero. 
\end{proof}

\begin{remark} One can prove a completely analogous lemma for curves $\bfx(t(s),s)\subset S$, with $t(s)$ a complex function. In particular, curves $\bfx({\bf c},s)$ with ${\bf c}\in \mathbb{C}$ are of this type. From here one can conclude that Lemma \ref{aux-str} also holds when ${\bf c}\in \mathbb{C}$.
\end{remark}

The conditions of Lemma \ref{rectacom} are exactly the conditions appearing in the statements (1) and (2) of Lemma \ref{eq1}. But these conditions imply Eq. \eqref{asint2} and Eq. \eqref{geod2}. 

\begin{corollary}
If $\bfx(t,s(t))$ is a real or complex straight line contained in $S$, then $s(t)$ satisfies Eq. \eqref{asint2} and Eq. \eqref{geod2}. 
\end{corollary}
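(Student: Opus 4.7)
The plan is to reduce both the real and complex cases to a single pair of intermediate conditions, namely (1) $\bfy''\in\mathcal{L}(\bfx_t,\bfx_s)$ and (2) $\langle \bfy'',(\bfx_t\times\bfx_s)\times\bfy'\rangle=0$ with $\bfy(t)=\bfx(t,s(t))$, and then to derive Eq.~\eqref{asint2} and Eq.~\eqref{geod2} from these by purely algebraic manipulations that are insensitive to the reality of $s(t)$. In the real case, Theorem~\ref{known} gives that $\bfy$ is both asymptotic and geodesic, and hence by Lemma~\ref{eq1} it satisfies (1) and (2); in the complex case, Lemma~\ref{rectacom} supplies (1) and (2) directly.

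For the implication (1)$\Rightarrow$\eqref{asint2}: at regular points, (1) is equivalent to $\langle \bfy'',\bfx_t\times\bfx_s\rangle=0$, because the scalar triple product $\det(\bfz,\bfx_t,\bfx_s)=\langle\bfz,\bfx_t\times\bfx_s\rangle$ vanishes iff $\bfz\in\mathcal{L}(\bfx_t,\bfx_s)$, both over $\mathbb{R}$ and over $\mathbb{C}$, once $\bfx_t,\bfx_s$ are linearly independent. Expanding $\bfy''=\bfx_{tt}+2\bfx_{ts}\omega+\bfx_{ss}\omega^2+\bfx_s\gamma$ with $\omega=s'$, $\gamma=s''$, and using $\langle\bfx_s,\bfx_t\times\bfx_s\rangle=0$ together with the definitions~\eqref{star}, this becomes $e^{\star}+2f^{\star}\omega+g^{\star}\omega^2=0$, which is exactly Eq.~\eqref{asint2}.

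For the implication (2)$\Rightarrow$\eqref{geod2}: using the identity $(A\times B)\times C=B\langle A,C\rangle-A\langle B,C\rangle$ and $\bfy'=\bfx_t+\omega\bfx_s$, condition (2) becomes
\[
\langle\bfy'',\bfx_s\rangle(E+F\omega)-\langle\bfy'',\bfx_t\rangle(F+G\omega)=0.
\]
Expanding $\bfy''$ as above, expressing the six scalar products $\langle\bfx_{tt},\bfx_t\rangle,\ldots,\langle\bfx_{ss},\bfx_s\rangle$ in terms of $E,F,G$ and their first partials, collecting powers of $\omega$ and isolating $\gamma$, one obtains, via the Christoffel-symbol formulas of~\cite{Patrikalakis}, precisely $\bfI$ times the difference of the two sides of~\eqref{geod2}. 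Since all these steps are polynomial identities in $t,s,\omega,\gamma$ with rational-function coefficients in $t,s$, they remain valid when $s(t)$ is complex, so (1) and (2) translate into \eqref{asint2} and \eqref{geod2} in both settings. The main obstacle is the bookkeeping in this last step: one must verify that the classical derivation of \eqref{geod2} from $k_g=0$ in \cite[Ch.~4]{Struik} can be re-executed using only dot products, cross products and partial differentiations—without ever invoking positive-definiteness of the Euclidean inner product or the existence of the unit normal $\bfN$—so that the resulting polynomial identity transports over to $\mathbb{C}$ without modification.
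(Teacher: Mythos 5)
Your proposal is correct and follows essentially the same route as the paper: the paper likewise funnels the real case (Theorem \ref{known} together with Lemma \ref{eq1}) and the complex case (Lemma \ref{rectacom}) into the two conditions $\bfy''\in{\mathcal L}(\bfx_t,\bfx_s)$ and $\langle\bfy'',(\bfx_t\times\bfx_s)\times\bfy'\rangle=0$, and then passes from these to Eq.~\eqref{asint2} and Eq.~\eqref{geod2} on the grounds that the translation is purely algebraic and hence insensitive to the reality of $s(t)$; your write-up supplies the expansion that the paper leaves implicit.

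One correction to your bookkeeping, and it matters precisely in the complex setting: expanding condition (2) yields \emph{exactly} the difference of the two sides of \eqref{geod2}, not $\bfI$ times that difference. Writing the tangential part of $\bfy''$ as $P\bfx_t+Q\bfx_s$, with $P=\Gamma^1_{11}+2\Gamma^1_{12}\omega+\Gamma^1_{22}\omega^2$ and $Q=\gamma+\Gamma^2_{11}+2\Gamma^2_{12}\omega+\Gamma^2_{22}\omega^2$, one finds
\[
\langle\bfy'',(\bfx_t\times\bfx_s)\times\bfy'\rangle=\bfI\,(Q-\omega P),
\]
and since $\widehat{\Gamma}^i_{jk}=\Gamma^i_{jk}\cdot\bfI$, the single factor $\bfI$ is absorbed into the hatted symbols, so the right-hand side equals $\bfI\gamma$ minus the right-hand side of \eqref{geod2}; the same conclusion follows from your scalar-product route using $\langle\bfx_{tt},\bfx_t\rangle=E_t/2$, etc., which has the additional virtue of never invoking $\bfN$. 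The distinction is not cosmetic: if the identity carried an extra factor of $\bfI$, then deducing \eqref{geod2} from condition (2) would require $\bfI\not\equiv 0$ along the curve, and over $\mathbb{C}$ this can fail --- the curves with $\Vert\bfx_t\times\bfx_s\Vert^2\equiv 0$ are exactly the ``special curves'' of Lemma \ref{whoare}, and a complex line could a priori lie in that locus. With the correct constant no such hypothesis is needed, so once this slip is fixed your argument closes without any gap.
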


Interestingly enough, and unlike the real case, Eq. \eqref{asint2} and Eq. \eqref{geod2} are necessary conditions for non-singular complex straight lines, but they are not sufficient. Consider for instance the surface
$S$ parameterized by 
\[
\bfx(t,s)=\left(\frac{1}{2}t(s^3+3s)+\frac{1}{2}s^2,t^2,t+s\right),
\]
and let $s(t)=i$, where $i^2=-1$. Then $\bfy(t)=\bfx(t,i)=\left(it-\frac{1}{2},t^2,t+i\right)$ is a complex parabola contained in $S$. One can check that $s(t)=i$ satisfies Eq. \eqref{asint2} and Eq. \eqref{geod2}; however, clearly $\bfy(t)$ does not define a straight line. The next result sheds some light on the kind of curves contained in $S$ which satisfy Eq. \eqref{asint2} and Eq. \eqref{geod2}, but are not straight lines.

\begin{lemma} \label{whoare}
Let $\bfy(t)=\bfx(t,s(t))$, with $s(t)$ a complex function, parameterize a complex curve ${\mathcal C}\subset S$. If $\bfy(t)$ satisfies  Eq. \eqref{asint2} and Eq. \eqref{geod2} but is not a straight line, then $\Vert \bfx_t \times \bfx_s \Vert^2$ identically vanishes over the points $(t,s(t))$.  
\end{lemma}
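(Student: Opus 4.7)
My plan is by contrapositive: I assume $\bfy(t)=\bfx(t,s(t))$ satisfies both \eqref{asint2} and \eqref{geod2} and suppose that $\Vert \bfx_t\times\bfx_s\Vert^2$ does \emph{not} identically vanish along the points $(t,s(t))$; I will then show that $\bfy(t)$ must parameterize a straight line, contradicting the hypothesis of the lemma.

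The first step is to re-express both differential conditions as purely algebraic identities that make sense over $\mathbb{C}$. Expanding $\bfy'' = \bfx_{tt}+2\bfx_{ts}s' + \bfx_{ss}(s')^2 + \bfx_s s''$ and using $\langle\bfx_s,\bfx_t\times\bfx_s\rangle=0$ (a polynomial identity, so valid in $\mathbb{C}^3$), one immediately finds that \eqref{asint2} is equivalent to $\langle\bfy'',\bfx_t\times\bfx_s\rangle = 0$. Analogously, applying the triple product identity $(\bfx_t\times\bfx_s)\times\bfy' = \langle\bfx_t,\bfy'\rangle\bfx_s - \langle\bfx_s,\bfy'\rangle\bfx_t$, expanding in terms of the first-kind symbols $\Gamma_{ij,k}=\langle\bfx_{ij},\bfx_k\rangle$, and then converting to second-kind Christoffel symbols via $\hat{\Gamma}^1_{ij}=G\Gamma_{ij,t}-F\Gamma_{ij,s}$ and $\hat{\Gamma}^2_{ij}=-F\Gamma_{ij,t}+E\Gamma_{ij,s}$, one sees that \eqref{geod2} is equivalent to $\langle\bfy'',(\bfx_t\times\bfx_s)\times\bfy'\rangle=0$. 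This is the algebraic content of Lemma \ref{eq1}(2), and it is the main technical (but routine) obstacle of the proof.

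On the Zariski-open subset of the curve where $\Vert\bfx_t\times\bfx_s\Vert^2\ne 0$, the cross product $\bfx_t\times\bfx_s$ is algebraically orthogonal to $\bfx_t,\bfx_s$ yet has nonzero self-pairing, so the triple $\bfx_t,\bfx_s,\bfx_t\times\bfx_s$ forms a basis of $\mathbb{C}^3$. The first identity then forces $\bfy''\in{\mathcal L}(\bfx_t,\bfx_s)$, and I may write $\bfy'=a_1\bfx_t+a_2\bfx_s$ and $\bfy''=b_1\bfx_t+b_2\bfx_s$. Substituting these expressions into the geodesic identity and simplifying via Lagrange's identity $EG-F^2=\Vert\bfx_t\times\bfx_s\Vert^2$ gives the clean formula
\[
\langle\bfy'',(\bfx_t\times\bfx_s)\times\bfy'\rangle \;=\; (a_1b_2-a_2b_1)\,\Vert\bfx_t\times\bfx_s\Vert^2,
\]
so the vanishing of the left-hand side forces $a_1b_2-a_2b_1=0$. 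Hence $\bfy'$ and $\bfy''$ are linearly dependent on a Zariski-dense subset of the curve; by polynomial continuation $\bfy'\times\bfy''\equiv 0$ on the whole curve, which means $\bfy(t)$ has a constant tangent direction and therefore parameterizes a straight line, contradicting the hypothesis. Apart from the algebraic verification in the previous paragraph, all remaining steps are elementary linear algebra.
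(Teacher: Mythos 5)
Your proof is correct, and it rests on the same two ingredients as the paper's (the algebraic reformulation of Eqs.~\eqref{asint2} and \eqref{geod2} as $\langle\bfy'',\bfx_t\times\bfx_s\rangle=0$ and $\langle\bfy'',(\bfx_t\times\bfx_s)\times\bfy'\rangle=0$, i.e.\ Lemma~\ref{eq1}, plus elementary triple-product linear algebra), but the middle of the argument runs differently. The paper argues directly: the vanishing triple product makes $\bfy''$, $\bfx_t\times\bfx_s$, $\bfy'$ linearly dependent, so $\bfy''=a(\bfx_t\times\bfx_s)+b\bfy'$; combined with $\bfy'',\bfy'\in{\mathcal L}(\bfx_t,\bfx_s)$ this yields $\bfx_t\times\bfx_s\in{\mathcal L}(\bfx_t,\bfx_s)$, and then orthogonality of the cross product against $\bfx_t$ and $\bfx_s$ forces $\Vert\bfx_t\times\bfx_s\Vert^2=0$. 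You instead work contrapositively on the locus where $\Vert\bfx_t\times\bfx_s\Vert^2\neq 0$, so that $\{\bfx_t,\bfx_s,\bfx_t\times\bfx_s\}$ is a basis, and prove the factorization
\[
\langle\bfy'',(\bfx_t\times\bfx_s)\times\bfy'\rangle=(a_1b_2-a_2b_1)\,\Vert\bfx_t\times\bfx_s\Vert^2,
\]
which I have checked (it follows from $(\bfx_t\times\bfx_s)\times\bfy'=\langle\bfx_t,\bfy'\rangle\bfx_s-\langle\bfx_s,\bfy'\rangle\bfx_t$ and Lagrange's identity, both valid over $\mathbb{C}$); this forces $\bfy'$ and $\bfy''$ to be proportional, hence the curve to be a straight line by analytic continuation. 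What your organization buys is that it makes explicit exactly where the nondegeneracy hypothesis is used: the paper's step from $\bfy''=a(\bfx_t\times\bfx_s)+b\bfy'$ to $\bfx_t\times\bfx_s\in{\mathcal L}(\bfx_t,\bfx_s)$ silently requires $a\neq 0$, which is precisely the assumption that $\bfy',\bfy''$ are not proportional (i.e.\ that ${\mathcal C}$ is not a straight line), while your identity absorbs that degenerate case automatically; in exchange, the paper's direct version is shorter. Both your write-up and the paper treat the equivalence between the differential equations and the triple-product conditions (your ``routine obstacle'') at the same level of detail, so there is no gap on that point; your stated conversion formulas $\widehat{\Gamma}^1_{ij}=G\langle\bfx_{ij},\bfx_t\rangle-F\langle\bfx_{ij},\bfx_s\rangle$ and $\widehat{\Gamma}^2_{ij}=E\langle\bfx_{ij},\bfx_s\rangle-F\langle\bfx_{ij},\bfx_t\rangle$ are indeed the correct ones.
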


\begin{proof}  Eq. \eqref{asint2} and Eq. \eqref{geod2} are equivalent to conditions (1) and (2) in Lemma \ref{rectacom}. Since we are assuming that Eq. \eqref{asint2} and Eq. \eqref{geod2} hold, $\langle\bfy'',(\bfx_t\times \bfx_s)\times \bfy'\rangle=0$. Since $\langle\bfy'',(\bfx_t\times \bfx_s)\times \bfy'\rangle$ can be computed as a determinant consisting of the components of $\bfy''$, $\bfx_t\times \bfx_s$, $\bfy'$, we have $\bfy''=a(\bfx_t\times\bfx_s)+b\bfy'$ for some $a,b\in \mathbb{ C}$. On the other hand, since $\bfy''\in {\mathcal L}(\bfx_t,\bfx_s)$, and $\bfy'\in {\mathcal L}(\bfx_t,\bfx_s)$ too, we deduce that $\bfx_t\times \bfx_s\in {\mathcal L}(\bfx_t,\bfx_s)$. Hence, there exist $c,d\in \mathbb{ C}$ such that $\bfx_t\times \bfx_s=c\bfx_t+d\bfx_s$. Therefore, we have 
\begin{equation}\label{firstprod}
\langle \bfx_t\times \bfx_s,\bfx_t\times \bfx_s\rangle=c\langle \bfx_t,\bfx_t\times \bfx_s\rangle +d\langle \bfx_s,\bfx_t\times \bfx_s\rangle.
\end{equation}
Now $\langle \bfx_t,\bfx_t\times \bfx_s\rangle$ can be computed as a determinant whose rows are the components of $\bfx_t$, $\bfx_t$ and $\bfx_s$, respectively. Since the first and third rows of the determinant are equal, we get $\langle \bfx_t,\bfx_t\times \bfx_s\rangle=0$. Similarly, $\langle \bfx_s,\bfx_t\times \bfx_s\rangle=0$. So from Eq. \eqref{firstprod}, $\langle \bfx_t\times \bfx_s,\bfx_t\times \bfx_s\rangle=\Vert \bfx_t\times \bfx_s\Vert^2=0$.
\end{proof}

\begin{remark} The same condition, i.e. $\Vert \bfx_t \times \bfx_s \Vert^2=0$ is also obtained for curves $\bfx(t(s),s)$. Notice that in the complex case this condition does not mean that the point is singular, i.e. that $\bfx_t\times \bfx_s=0$. 
\end{remark}

If ${\mathcal C}$ is parameterized by $\bfy(t)=\bfx(t,s(t))$, with $s(t)$ a complex function, and $\bfy(t)$ satisfies Eq. \eqref{asint2} and Eq. \eqref{geod2} but is not a straight line, we say that it is a \emph{special curve}. Similarly for curves $\bfx(t(s),s)$, with $t(s)$ a complex function. Notice that since $\bfx(t,s)$ parametrizes a surface, $\Vert \bfx_t\times \bfx_s \Vert^2$ is not identically zero; therefore there are at most finitely many special curves contained in $S$. We will use this fact when proving Theorem \ref{ruled} in the next section.

\subsection{A detailed example.}\label{detail-ex}

Consider the surface $S$ parameterized by \[\bfx(t,s)=(-s^3+3st^2+3s,3s^2t-t^3+3t,3s^2-3t^2).\]This is a minimal surface of degree 9, called the {\it Enneper surface}. Eq. \eqref{as-rat} yields 
\begin{equation}\label{ej1eq}
(-54)(\omega-1)(\omega+1)(s^2+t^2+1)^4=0,
\end{equation}
while Eq. \eqref{diff3} yields 
\begin{equation}\label{ej2eq}
2(\omega^2+1)(-tw+s)=0.
\end{equation}
Hence $M(t,s,\omega)=\omega^2-1$, $N(t,s,\omega)=2(\omega^2+1)(-tw+s)$, and then 
\begin{equation}\label{ej3eq}
\xi(t,s)=16(-t+s)(t+s).
\end{equation}
Furthermore, $\delta_1(t,s)=s^2+t^2+1$ and $\delta_2(t,s)=(s^2+t^2+1)^2$. Additionally, $\delta_3(t,s)$ is the determinant of the second fundamental form, which yields $\delta_3(t,s)=-2916(s^2+t^2+1)^4$. Also, \[\Vert \bfx_t \times \bfx_s\Vert^2=81(t^2+s^2+1)^4,\]so $\delta_4(t,s)=(t^2+s^2+1)^4$. Therefore, 
\[\mu(t,s)=(t+s)\cdot (t-s)\cdot (s^2+t^2+1).\]
We observe that the curve $\mu(t,s)=0$ has three irreducible components over the complex. Let us analyze each one.
\begin{itemize}
\item [(1)] The component $t+s=0$ is obviously rational. Plugging $s=-t$ into $\bfx(t,s)$, we get 
\[\bfx(t,-t)=(2s^3+3s, -2s^3-3s, 0),\]which defines a straight line through $(0,0,0)$ parallel to the vector $(1,-1,0)$. However, in order to illustrate the criterion obtained from Proposition \ref{check-line}, let us see how to use this criterion here. Now let $\alpha(t,s)=t+s$, and let $a=1$, so that $m(b)=1+b$. Furthermore, 
\[\bfw(t,s)=(3s^2+6st-3t^2-3, 3s^2-6st-3t^2+3, 0).\]After evaluating $\bfw(t,s)$ at $t=1,s=-1$, we get 
\[\bfw_0=\bfw(1,-1)=(-9, 9, 0).\]Hence, 
\[\bfw(t,s)\times \bfw_0=(0,0,54s^2-54t^2).\]We observe that all the components of $\bfw(t,s)\times \bfw_0$ are divisible by $\alpha(t,s)=t+s$, so $\alpha(t,s)=0$ certainly corresponds to a straight line contained in $S$. 
\item [(2)] The component $t-s=0$ is again rational. Plugging $s=t$ into $\bfx(t,s)$, we get 
\[\bfx(t,t)=(2s^3+3s, 2s^3+3s, 0),\]which defines a straight line through $(0,0,0)$ parallel to the vector $(1,1,0)$. 
\item [(3)] The last component corresponds to $s^2+t^2+1=0$. The component is again rational, and can be parameterized by 
\[\left(\frac{\beta}{2}\left(z^2+\frac{1}{z}\right),\frac{z^2-1}{2z}\right),\]
where $\beta^2+1=0$. By plugging this parameterization into $\bfx(t,s)$, we get 
\[\left(-\frac{z^4-2z^2+1)(z^2-1)}{2z^3}, \frac{\beta (z^2+1)(z^4+2z^2+1)}{2z^3}, \frac{3(z^4+1)}{2z^2}\right),\]
 which clearly does not correspond to a straight line. However, let us check what the criterion from Proposition \ref{check-line} yields in this case. First, we have $\alpha(t,s)=t^2+s^2+1$; furthermore, we pick $t=1$, so that $m(b)=b^2+2$. Now 
\[\bfw(t,s)=(18s^2t-6t^3-6t, 6s^3-18st^2+6s, -24st).\]Furthermore, 
\[\bfw_0=\bfw(1,b)=(18b^2-12, 6b^3-12b, -24b).\]Therefore, 
\[
\begin{array}{rcl}
\bfw(t,s)\times \bfw_0&=&(144bs(s^2-3t^2+4t+1),-1152st-24b(18s^2t-6t^3-6t),\\
&&-288s^3+864t^2s-288s+24b(18s^2t-6t^3-6t)),
\end{array}
\]
and one can easily see that $t^2+s^2+1$ does not, for instance, divide the first component. So we deduce that $t^2+s^2+1=0$ does not give rise to any straight line of $S$. 
\end{itemize}

Finally, one can check that \[\eta(t,s)=\gcd\left(g^{\star}(t,s),\mbox{num}\left(\widehat{\Gamma}^1_{22}(t,s)\right)\right)=1,\] and therefore there are no straight lines of the type $\bfx({\bf c},s)$, where ${\bf c}$ is a constant. The whole computation takes 0.078 seconds. Figure 1 shows a picture of the surface, together with the two straight lines we have computed.

\begin{figure}
$$\begin{array}{c}
  \includegraphics[scale=0.3]{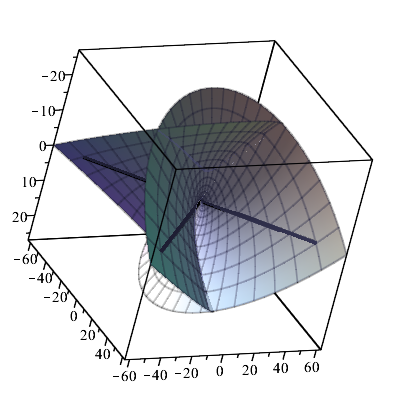}
 \end{array}$$
 \caption{Straight lines contained in an Enneper surface.}
\end{figure}

\section{How the algorithm recognizes ruled surfaces.}\label{term}

In this section, we will prove that polynomials $\mu(t,s)$ and $\eta(t,s)$ defined in steps 13 and 18 of the Algorithm {\tt STLines} identically vanish if and only if $S$ is a ruled surface. Since Algorithm {\tt STLines} requires $S$ not to be ruled, this proves that the loops in steps 14 and 19 do not run forever, so the algorithm works properly. Additionally, we observe that in fact we do not really need to check in advance whether or not $S$ is ruled: if while the algorithm is working we detect that either $\mu(t,s)$ or $\eta(t,s)$ identically vanishes, we give back the answer {\tt $S$ is a ruled surface}, which in particular means that $S$ contains infinitely many (maybe complex) straight lines. 

We need some previous results. The following lemma is perhaps known; however we could not find any appropriate reference in the literature, so we provide here a complete proof. 

\begin{lemma}\label{manylines}
Let $S$ be an algebraic surface. If $S$ contains infinitely many straight lines, then $S$ is a ruled surface.
\end{lemma}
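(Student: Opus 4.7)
The plan is to view lines in $S$ as points of the Grassmannian $\mathbb{G}(1,3)$ of projective lines in $\mathbb{P}^3$ (working with the projective closure of $S$, since ruledness is a local/pointwise property that passes to affine pieces). The condition ``the line $\ell$ is contained in $S$'' is cut out by finitely many polynomial equations in the Pl\"ucker coordinates of $\ell$ (obtained, e.g., by substituting a parametrization of $\ell$ into the defining equation of $S$ and requiring all coefficients to vanish). Hence the set
\[
F(S)=\{\ell \in \mathbb{G}(1,3)\mid \ell\subset \overline{S}\}
\]
is a closed algebraic subvariety of the Grassmannian.

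If $S$ contains infinitely many straight lines, then $F(S)$ is an infinite algebraic set, and therefore has positive dimension. Pick an irreducible algebraic curve $\mathcal{F}\subset F(S)$, and consider the incidence variety
\[
I=\{(P,\ell)\in \overline{S}\times \mathcal{F}\mid P\in \ell\}.
\]
The first projection $\pi_1\colon I\to \mathcal{F}$ is surjective with every fiber isomorphic to the corresponding line $\ell$, so $\dim I = \dim \mathcal{F}+1 = 2$.

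The second projection $\pi_2\colon I\to \overline{S}$ is a morphism between projective varieties, hence its image is closed in $\overline{S}$. I will argue this image must coincide with all of $\overline{S}$. Indeed, if the image were a proper closed subset of $\overline{S}$, it would be an algebraic curve $C\subsetneq \overline{S}$ containing every line $\ell\in \mathcal{F}$. Since each such $\ell$ is one-dimensional, each $\ell$ would have to coincide with an irreducible component of $C$; but $C$ has only finitely many irreducible components, contradicting the fact that $\mathcal{F}$ is an infinite (one-dimensional) family of distinct lines. Therefore $\pi_2$ is surjective: through every point $P\in \overline{S}$ passes a line of $\mathcal{F}\subset F(S)$, and restricting to the affine part shows the same for every $P\in S$. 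By the definition of ruled surface recalled at the end of Section~\ref{sec-prelim.}, this proves that $S$ is ruled.

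The main technical point is the step showing $\pi_2$ is surjective rather than merely dominant; this is what forces us to work with the projective closure, where images of morphisms are closed. The rest is bookkeeping: verifying that $F(S)$ is indeed a closed algebraic subset of the Grassmannian (standard, via Pl\"ucker coordinates), and that an irreducible positive-dimensional algebraic set contains an irreducible curve (which can be obtained by cutting with generic hyperplane sections until one lowers the dimension to one).
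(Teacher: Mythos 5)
Your proposal is correct and follows essentially the same route as the paper: both pass to the closed set of lines in the Grassmannian $\mathbb{G}(1,3)$, form the incidence variety, and conclude that the (closed) image of the projection to $\overline{S}$ cannot be a curve because an infinite family of distinct lines cannot fit inside finitely many irreducible components, hence the projection is surjective and $S$ is ruled. Your extra steps (cutting $F(S)$ down to an irreducible curve $\mathcal{F}$ and computing $\dim I = 2$) are harmless but inessential, since the contradiction argument you give for surjectivity is exactly the paper's key step.
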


\begin{proof} 
Let $X$ be the algebraic set of all the straight lines $\ell$ contained in $\overline{S}$, the Zariski closure of $S$, which is contained in the grassmannian $\mathbb{G}(1,3)$ of lines in the complex projective space $\mathbb{P}^3$. Furthermore, let $I=\{(x,\ell)\ |\ x\in \bar{S},\ell\in X\}$. Now consider the incidence diagram:

\[
\xymatrix{ & I \ar[ld]_p \ar[rd]^q& \\\overline{S} & & X}
\]

Since $S$ contains infinitely many straight lines, we have dim$X\ge 1$. Additionally $p(I)$ is the union of the elements of $X$, considered as straight lines in $\mathbb{P}^3$. Since $p(I)$ is an infinite union of different straight lines it cannot be a curve, so $\mbox{dim}(p(I))\ge 2$. Therefore $p(I)$ is a closed surface contained in the irreducible surface $\overline{S}$. Hence $p(I)=\overline{S}$, so $p$ is surjective. Thus, for all $x\in S$ there is a straight line $\ell\in X$ passing through $x$ (equivalently, $\overline{S}$ is the union of its lines). Hence $\overline{S}$ is ruled, and so is $S$.
\end{proof}

Now we are ready to prove the result. We recall here that if $\mbox{det}(II)>0$ at a point $P_0\in S$, where $II$ is the matrix defining the second fundamental form of $\bfx(t,s)$, then we say that $P_0$ is an {\it elliptic point}; if $\mbox{det}(II)=0$, $P_0$ is a {\it parabolic point}; if $\mbox{det}(II)<0$, $P_0$ is a {\it hyperbolic point}.

\noindent\begin{theorem}\label{ruled}
A rational surface $S$ properly parameterized by $\bfx(t,s)$ is a ruled surface if and only some of the polynomials $\mu(t,s)$, $\eta(t,s)$ are identically zero.
\end{theorem}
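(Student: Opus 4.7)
The strategy is to reduce both directions of the equivalence to the question of whether $S$ contains infinitely many (possibly complex) straight lines, invoking Lemma \ref{manylines} for the ``if'' direction and a simple finiteness argument for the ``only if'' direction.

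For the ``if'' direction, first suppose $\eta\equiv 0$. Then every polynomial $t-\bfc$ with $\bfc\in\CC$ divides $\eta$, so by Lemma \ref{aux-str} (and its complex extension given after Lemma \ref{rectacom}) every curve $\bfx(\bfc,s)$ is a straight line, providing infinitely many lines on $S$. Next suppose $\mu\equiv 0$; then the square-free part of $\xi\cdot\delta_1\cdot\delta_2\cdot\delta_3\cdot\delta_4$ vanishes, so one of these factors is identically zero. The case $\delta_4\equiv 0$ is impossible, because over $\RR$ the identity $\Vert\bfx_t\times\bfx_s\Vert^2\equiv 0$ would force $\bfx_t\times\bfx_s\equiv 0$, contradicting that $\bfx$ parametrizes a surface. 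If $\delta_3\equiv 0$ then $\det(II)\equiv 0$, i.e. the Gaussian curvature vanishes identically; by the classical characterization of developable surfaces (a non-planar surface with $K\equiv 0$ is developable, hence ruled) $S$ is ruled. For $\delta_1\equiv 0$ or $\delta_2\equiv 0$ the ensuing relations among $e^{\star}, f^{\star}, g^{\star}$ either force the second fundamental form to vanish (so $S$ is a plane, excluded by hypothesis) or else reduce the situation to one of the other cases; this is essentially bookkeeping through the branches of Subsection \ref{subsec-spec}. The crucial case is $\xi\equiv 0$: then $M(t,s,\omega)$ and $N(t,s,\omega)$ share a nonconstant common factor in $\omega$ over $\CC(t,s)$, yielding an algebraic slope function $\omega(t,s)$ such that both Eq. \eqref{as-rat} and Eq. \eqref{diff3} hold along $\omega=\omega(t,s)$. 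The implicit ODE $ds/dt=\omega(t,s)$ has a $1$-parameter family of integral curves $s(t)$, each satisfying Eq. \eqref{asint2} and Eq. \eqref{geod2}; by Lemma \ref{whoare} each such curve gives either a straight line or a special curve. Since special curves lie on $\Vert\bfx_t\times\bfx_s\Vert^2=0$ and are thus finite in number, the family contains infinitely many lines, and Lemma \ref{manylines} then concludes that $S$ is ruled.

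For the ``only if'' direction, suppose $S$ is ruled. Then a line passes through every point of $S$, so $S$ contains infinitely many distinct lines. For any such line $\mathcal{L}\subset S$ covered by $\bfx$, the Zariski-closed preimage $\bfx^{-1}(\mathcal{L})$ is a $1$-dimensional algebraic curve in the $(t,s)$-plane (it cannot be $2$-dimensional, otherwise $\bfx$ would collapse onto $\mathcal{L}$ instead of covering $S$), and properness of $\bfx$ ensures that distinct lines yield distinct preimages, up to finite multiplicity from the ramification locus. If $\mathcal{L}$ is of the form $\bfx(\bfc,s)$, Lemma \ref{aux-str} (and its complex extension) forces $t-\bfc$ to divide $\eta$. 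Otherwise $\mathcal{L}$ admits a local parametrization $\bfx(t,s(t))$, and the Corollary after Lemma \ref{rectacom} ensures $s(t)$ satisfies Eq. \eqref{asint2} and Eq. \eqref{geod2}, so the minimal polynomial defining $\bfx^{-1}(\mathcal{L})$ divides $\xi$---or, in the exceptional situations handled by Subsection \ref{subsec-spec} (including lines contained in $\mbox{Sing}_{\bfx}$), it divides some $\delta_i$. Either way the minimal polynomial divides $\mu^{\star}$. If both $\mu$ and $\eta$ were nonzero polynomials, they would have only finitely many irreducible factors, producing only finitely many candidate lines and contradicting the infinitude of lines on $S$. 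Hence $\mu\equiv 0$ or $\eta\equiv 0$.

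\textbf{Main obstacle.} The heart of the argument is the $\xi\equiv 0$ subcase: one must rigorously convert the algebraic condition (vanishing resultant) into a genuine $1$-parameter family of integral curves of an ODE and show that infinitely many of them are actual lines rather than special curves, which is where Lemma \ref{whoare} and the finiteness of $\Vert\bfx_t\times\bfx_s\Vert^2=0$ become essential. The classical developable-implies-ruled ingredient for $\delta_3\equiv 0$ is standard, and the remaining exceptional $\delta_1, \delta_2$ cases are routine once each branch of the algorithm in Subsection \ref{subsec-spec} is traced through.
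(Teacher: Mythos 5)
Your proposal is correct in substance and follows essentially the same route as the paper's proof: the same case split in the ``if'' direction ($\eta\equiv 0$ gives coordinate straight lines through every point; $\delta_3\equiv 0$ gives a developable, hence ruled, surface; $\xi\equiv 0$ gives a common factor of $M$ and $N$ whose slope field is integrated to produce infinitely many curves satisfying Eq.~\eqref{asint2} and Eq.~\eqref{geod2}, all but finitely many of which are straight lines, so Lemma \ref{manylines} applies), and the same finiteness-of-factors contradiction in the ``only if'' direction.

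Two points of comparison. First, a concrete (though easily repaired) gap: in the ``only if'' direction you conclude that finitely many irreducible factors of $\mu$ and $\eta$ produce ``only finitely many candidate lines, contradicting the infinitude of lines on $S$'', but your candidates only account for lines \emph{covered} by $\bfx$; to get a contradiction you must also observe, as the paper does explicitly, that the set of points of $S$ not covered by the parameterization has dimension at most $1$, hence contains only finitely many lines, so infinitely many of the lines on a ruled $S$ must indeed be covered. Second, a genuine difference of route inside the $\xi\equiv 0$ case: you treat real and complex slope fields uniformly, keeping every integral curve and discarding the finitely many special curves via Lemma \ref{whoare}; the paper instead splits into a real case, where Picard--Lindel\"of plus Theorem \ref{known} yields genuine real straight lines through every point of an open set, and an elliptic case (discriminant everywhere negative), where it invokes the existence--uniqueness theorem for ODEs in the complex domain and only then the special-curve finiteness argument. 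Your uniform treatment is tidier, but to make it rigorous you should name the existence theorem that supplies the one-parameter family of integral curves when $\omega(t,s)$ is complex-valued (the paper cites Hille), and you should note, as the paper does, that the argument as given assumes $M$ has degree two in $\omega$; the degree-one and degree-zero cases ($g^{\star}\equiv 0$, resp. $f^{\star}\equiv g^{\star}\equiv 0$) need the separate, simpler ODE arguments sketched at the end of the paper's proof rather than the common-factor construction.
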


\begin{proof}
$(\Rightarrow)$ If $S$ is ruled then $S$ contains infinitely many straight lines. If $\mu(t,s)$, $\eta(t,s)$ are not identically zero then they have finitely many irreducible components. Therefore, there should be infinitely many straight lines not covered by the parameterization. But this cannot happen, because the set of points of $S$ not covered by $\bfx(t,s)$ has dimension at most 1. 

\noindent $(\Leftarrow)$ If $\eta(t,s)$ is identically zero then by Lemma \ref{aux-str} any coordinate line $\bfx({\bf c},s)$, with ${\bf c}$ a {real} constant, is a straight line. Since we have one coordinate line of this type through every point, then $S$ is ruled. So assume that $\mu(t,s)$ is identically zero, in which case either $\xi(t,s)$ or $\delta_3(t,s)$ is identically zero. If $f^{\star},g^{\star}$ are both identically zero then $\delta_3(t,s)$ is a nonzero constant; otherwise $\delta_3(t,s)=\mbox{det}(II)$. In that case, if $\delta_3(t,s)$ is identically zero then $S$ is developable (see pg. 103 of \cite{Struik}), and therefore ruled. Therefore, let us assume that $\xi(t,s)$ is identically zero. 

Now let ${\mathcal M}$, ${\mathcal N}$ denote the algebraic surfaces in the $(t,s,\omega)$-space defined by $M(t,s,\omega)$ and $N(t,s,\omega)$. If $\xi(t,s)$ is identically zero, then ${\mathcal M}$, ${\mathcal N}$ share a component $P(t,s,\omega)=0$. We have three different cases, depending on whether $M$ has degree two, degree one (when $g^{\star}=0$) or degree zero (when $g^{\star}=f^{\star}=0$). We will address in detail the case when $M$ has degree two, which is the most difficult one. Later, we will make some observations on the other two cases.

Assuming that $M$ has degree two in $\omega$, and since $M$ is primitive with respect to the variable $\omega$ (because its content with respect to $\omega$ was removed), ${\mathcal M}$ has at most two irreducible components, corresponding to 
\[\omega=\frac{-f^{\star}+\sqrt{(f^{\star})^2-4e^{\star}g^{\star}}}{e^{\star}},\mbox{ }\omega=\frac{-f^{\star}-\sqrt{(f^{\star})^2-4e^{\star}g^{\star}}}{e^{\star}}.\]
Without loss of generality, we will assume that $P(t,s,\omega)=0$ contains the graph of 
\[\omega=\frac{-f^{\star}+\sqrt{(f^{\star})^2-4e^{\star}g^{\star}}}{e^{\star}}.\]

We have now two possibilities, depending on whether or not there exists an open set $\Omega\subset \mathbb{ R}^2$ where $(f^{\star})^2-e^{\star}g^{\star}\geq 0$. We focus on the affirmative case; the negative case will be addressed later. Let $\Omega\subset \mathbb{ R}^2$ be an open set where $(f^{\star})^2-e^{\star}g^{\star}\geq 0$, $e^{\star}\neq 0$, and all the Christoffel's symbols are well-defined. By Picard-Lindel\"of's Theorem, for any $(t_0,s_0)\in \Omega$ the initial value problem
\[
\left\{
\begin{array}{ccc}
\displaystyle{\frac{ds}{dt}} & = & \displaystyle\frac{-f^{\star}+\sqrt{(f^{\star})^2-4e^{\star}g^{\star}}}{e^{\star}},\\
s(t_0) & = & s_0
\end{array}
\right.
\]
has a unique, real, solution, $s(t)$, defined over an interval $I$ containing $t_0$. By construction, the space curve ${\mathcal C}_1$ (in the $(t,s,\omega)$-space) parameterized by 
\[\left(t,s(t),\frac{ds}{dt}(t)\right),\]with $t\in I\subset \mathbb{ R}$, $t_0\in I$, is contained in the surface $P(t,s,\omega)=0$. Now let $\omega_0=\frac{ds}{dt}(t_0)$. Also by Picard-Lindel\"of's Theorem, the initial value problem 
\[
\left\{
\begin{array}{ccc}
\displaystyle{\frac{dw}{dt}}&=&\psi(t,s(t),\omega),\\
\omega(t_0)&=&\omega_0
\end{array}
\right.
\]
where $\psi(t,s(t),\omega)$ represents the right hand-side of Eq. \eqref{diff2} evaluated at $s=s(t)$, has a unique, real, solution $\omega(t)$. Again, by construction the space curve ${\mathcal C}_2$ parameterized by 
\[\left(t,s(t),\omega(t)\right),\]with $t\in J$, $t_0\in J$, is contained in the surface $P(t,s,\omega)=0$. Now since ${\mathcal C}_1$, ${\mathcal C}_2$ are two analytic curves contained in the surface $P(t,s,\omega)=0$, sharing the point $(t_0,s_0,\omega_0)$, and projecting onto the same curve $(t,s(t))$ of the $(t,s)$-plane for $t\in I\cap J$, then ${\mathcal C}_1$ and ${\mathcal C}_2$ must coincide. Since $s(t)$ satisfies both Eq. \eqref{asint2} and Eq. \eqref{geod2} and $s(t)$ is real, by Theorem \ref{known} then $\bfx(t,s(t))$ must be a straight line through the point $P_0=\bfx(t_0,s_0)$. Since this construction is valid for $(t_0,s_0)\in \Omega$, we conclude that $S$ contains infinitely many straight lines. But then the implication follows from Lemma \ref{manylines}.

If $(f^{\star})^2-e^{\star}g^{\star}< 0$ for all $(t,s)\in \mathbb{ R}^2$, so that all the points of $S$ are elliptic, we proceed as before replacing Picard-Lindel\"of's Theorem by the Complex Existence and Uniqueness Theorem for differential equations in the complex domain (see for instance Theorem 2.2.1 in \cite{Hille}). This way, we construct infinitely many curves contained in $S$, satifying Eq. \eqref{asint2} and Eq. \eqref{geod2}. Since only finitely many of them can be special curves (see the end of Section \ref{subsec-complex}), we also get infinitely many straight lines (complex, this time) contained in $S$, so $S$ is ruled, and the result follows.

In the cases $g^{\star}=0,\mbox{ }f^{\star}\neq 0$ and $g^{\star}=f^{\star}=0$, we also invoke Picard-Lindel\"of's Theorem, applied this time to the differential equation $\omega=\frac{ds}{dt}=\Phi(t,s)$ provided by Eq. \eqref{as-rat}, when $g^{\star}=0,\mbox{ }f^{\star}\neq 0$, and tothe differential equation provided by the derivative with respect to $t$ of Eq. \eqref{as-rat}, when $g^{\star}=f^{\star}=0$. Notice that in these cases $\Phi(t,s)$ is, unlike the case when $M$ has degree two, a rational function, which simplifies the reasoning.
\end{proof}

\section{Experimentation.}\label{sec-experimentation}

In this section we report on our implementation of the algorithm {\tt STLines}, and we compare these timings with the timings corresponding to two brute-force approaches. All the examples have been run in an Intel Core computer, revving up at 2.90 GHz, with 8 Gb of RAM memory, using the computer algebra system {\tt Maple} 18. The code of our implementation can be freely downloaded from \cite{Jorge}; the parameterizations used in the experimentation can be found in Appendix I. 

\subsection{Brute-force approaches.}\label{brute-force}

If the implicit equation $F(x,y,z)=0$ of $S$ is available, a first brute-force approach to find the straight lines contained in $S$ consists of considering a generic line 
\[
x=t,\mbox{ }y=a t+b,\mbox{ }z=c t+d,
\]
and impose that $F(t,a t+b, ct+d)$ is identically zero. If the degree of $F$ is $N$, this amounts to solving a polynomial system ${\mathcal S}$ of degree $N$ in 4 variables; straight lines contained in planes normal to the $x$-axis are not computed this way, but they can be computed by a similar method, with fewer variables. The system ${\mathcal S}$ can be solved by using Gr\"obner bases. This is feasible even for serious degrees whenever $F$ is not dense, but it gets increasingly difficult in the dense case for $N>10$ (see later). 

If $S$ is defined by means of a rational parametrization, as it is the case in this paper, the cost is double, since we need to compute first the implicit equation, and then solve the system ${\mathcal S}$. The computation of the implicit equation of $S$ can be difficult, and in fact implicitization techniques are an on-going subject of research. A simple, general method for computing the implicit equation is to use Gr\"obner bases, jointly with Rabinowitsch's trick in the case when the parameterization has denominators. This is the method that we used in our experiments. However, there are other, often much better, more sophisticated techniques for computing the implicit equation, like $\mu$-bases \cite{Cheng, Deng, SR17-1, SR17-3}, moving planes \cite{SR17-2} or moving surfaces \cite{Zheng}. We did not use these other methods in our analysis, which in some cases can dramatically improve the computation time with respect to the Gr\"obner basis approach. However, even if the time devoted to computing the implicit equation can be improved, depending on the size of the implicit equation, we still have the problem of dealing with the system ${\mathcal S}$ (see Table 2).

The next table, Table 1, shows the performance of this method for some parameterizations, which can be found in \cite{Jorge}, next to the code for our algorithm {\tt STlines}. The columns correspond to the name of the example, the bidegree of the parameterization, the computation time (in seconds) to find the implicit equation (Time impl.), the degree $\mbox{deg}(F)$ of the implicit equation, the number of terms $\mbox{n}_{\mbox{terms}}$ of the implicit equation $F$, the computation time for finding the straight lines (Time solv.), i.e. to solve the system ${\mathcal S}$, and the total computation time $t_{\mbox{imp}}=\mbox{Time impl.}+\mbox{Time solv.}$ 

\begin{center}
\begin{tabular}{|c|c|c|c|c|c|c|c|} 
\hline Ex. & Bideg. & Time impl. & deg($F$) & $\mbox{n}_{\mbox{terms}}$ & Time solv. & $t_{\mbox{imp}}$ \\
\hline $S_1$ & (3,2)  & 0.499 & 9 & 23 &  0.234 &  0.733 \\
\hline $S_2$ & (2,2)  & 0.702 & 3 & 16 & 0.219 &  0.921 \\
\hline $S_6$ & (5,1)  & 347.898 & 10 & 62 & 0.141 & 348.039 \\
\hline $S_8$ & (5,2)  & 0.062 & 10 & 4 & 0.031 & 0.093 \\
\hline $S_9$ & (3,3)  & 2.433 & 14 & 8 & 0.094 & 2.527 \\
\hline $S_{11}$ & (3,5) & 0.047 & 16 & 2 & 0.015 & 0.062 \\
\hline $S_{15}$ & (2,2) & 0.125 & 6 & 5 & 0.047 & 0.172 
\\
\hline
\end{tabular}
\end{center}
\begin{center}
{\bf Table 1:} Examples for the brute-force approach computing the implicit equation. 
\end{center}

\vspace{0.2 cm}
In all these examples, corresponding to parameterizations which are not too complicated, the computation of the implicit equation takes little time, with the exception of $S_6$. However, once the implicit equation is there, and since in these cases the implicit equations, again, are not too big, the computation time of solving the polynomial system is very small. The case of $S_6$ is illustrative: even though computing the implicit equation takes time, solving the polynomial system does not. The effect of having larger implicit equations can be observed in the next table, Table 2. In this case we did not start from parameterizations, but from implicit equations $F$ randomly generated in the computer algebra system {\tt Maple}. One can see that as the degree and number of terms of $F$ grow larger, the computation time for solving the system gets much bigger. In Table 2 we show the degree deg($F$) of the implicit equation, the number of terms of $F$, a bound $\kappa$ on the absolute value of the coefficients, and the computation time of solving the polynomial system. As the implicit equation gets bigger, the computation turns unfeasible; nevertheless, if the equation is sparse, as in the last row of Table 2, the computation is still doable in reasonable time. 

\begin{center}
\begin{tabular}{|c|c|c|c|} 
\hline deg($F$) & $\mbox{n}_{\mbox{terms}}$ & $\kappa$ & Time solv. \\
\hline 8 & 155 & 5 & 17.191 \\
\hline 9 & 208  & 5 & 25.475 \\
\hline 10 & 52  & 10 & 68.734 \\
\hline 12 & 54 & 10 &  423.121 \\
\hline 15 & 71  & 10 & $>1000$ \\
\hline 20 & 14 & 25 & 17.285
\\
\hline
\end{tabular}
\end{center}
\begin{center}
{\bf Table 2:} Larger implicit equations
\end{center}

A second brute-force method, more suitable for a rational input, is the following: any straight line contained in $S$, not parallel to the $yz$-plane, is contained in the intersection of two planes 
\[
y=ax+b,\mbox{ }z=cx+d.
\]
Therefore, in order to compute the straight lines contained in $S$, we can consider the equations
\[
Q_1(t,s)=y(t,s)-a \cdot x(t,s)-b=0,\mbox{ }Q_2(t,s)=z(t,s)-c\cdot x(t,s)-d =0,
\]
where we can eliminate the variable $s$. This way we get a polynomial $P$ in the variable $t$, that must be identically zero. Again, this yields a polynomial system in 4 variables of degree ${\mathcal O}(N_1\cdot N_2)$, where $N_1,N_2$ are bounds for the degrees in the variables $t,s$ of the numerators of $Q_1,Q_2$. The straight lines parallel to the $yz$-plane can be computed in a similar way, solving a polynomial system with fewer variables. We compare, in Table 3, the timings between this method ($t_{\mbox{rat}}$) and the brute-force approach computing the implicit equation ($t_{\mbox{imp}}$): in all the cases, except for one case where both methods perform very well, this second brute-force approach beats the first one. The best time is shown in bold letter. Additionally, in Table 3 we also show the degree $\mbox{deg}(P)$ of the polynomial $P$.

\begin{center}
\begin{tabular}{|c|c|c|c|c|c|} 
\hline Ex. & Bideg.   & $t_{\mbox{imp}}$ & $\mbox{deg}(P)$ & $t_{\mbox{rat}}$ & $t_{\mbox{our}}$\\
\hline $S_1$  & (3,2)  & 0.733 & 9 & {\bf 0.109} & 0.218 \\
\hline $S_2$ & (2,2)  & 0.921 & 12 & {\bf 0.468} & 1.248\\
\hline $S_6$  & (5,1) & 348.039 & 3 & {\bf 0.032} & 1.404 \\
\hline $S_8$  & (5,2) & 0.093 & 10 & {\bf 0.0} & 0.094 \\
\hline $S_9$ & (3,3)  & 2.527 & 17 & {\bf 0.0} & 1.326 \\
\hline $S_{11}$ & (3,5) & {\bf 0.062} & 16 & 0.094 & 0.390 \\
\hline $S_{15}$ & (2,2) & 0.172 & 6 & {\bf 0.015} & 26.957 
\\
\hline
\end{tabular}
\end{center}
\begin{center}
{\bf Table 3:} Examples comparing both brute-force approaches
\end{center}

We have also added, in Table 3, a last column with the timing of our algorithm, {\tt STLines}: in all the examples in Table 3, where the parametrizations are not too complicated, we cannot produce better timings than the brute-force approaches. The situation changes, as we show in the next section, when we deal with bigger inputs.

\subsection{Our method}\label{subsec-timing}

In this subsection we test our algorithm against the brute-force approaches, with bigger inputs. The parameterizations of the surfaces $S_i$ can be found in \cite{Jorge}. Whenever we write $S_i^{\star}$, we mean a reparameterization of the surface with the following change of parameters:
\[
t:=\frac{2t}{t^2+s},\mbox{ }s:=\frac{3s}{t^2+s},
\]
which in general produces denser parameterizations, and with higher bidegree. The timings are provided in Table 4. Here we did not include the timings of the brute-force approach computing the implicit equation, but only the second brute-force approach; the reason is that in all the cases the computation time of the implicit equation exceeds reasonable timings. One can see that our method beats the brute-force approach in most of the cases; in fact, in most of the cases the brute-force approach cannot produce an answer in a reasonable amount of time. Additionally, in each case we show the number of straight lines found by our algorithm ($n_{\ell}$). The best time for each example is shown in bold letter. In the case of $S_{19}$, the asterisk denotes that Maple failed to compute the polynomial $P$ after a reasonable amount of time. 

\begin{center}
\begin{tabular}{|c|c|c|c|c|c|c|} 
\hline Ex. & Bideg.   & deg($P$) & $t_{\mbox{rat}}$ & $t_{\mbox{our}}$ & $n_{\ell}$ \\
\hline $S_1^{\star}$  & (6,3)  & 18 & $> 300$ & {\bf 0.817} & 2 \\
\hline $S_2^{\star}$ & (6,3)  & 16 & $> 300$ & {\bf 3.681} & 18 \\
\hline $S_4^{\star}$  & (6,3) & 28 &  $> 300$ & {\bf 10.155} & 1  \\
\hline $S_5^{\star}$  & (10,5) & 30 & $> 300$ & {\bf 0.749} & 1  \\
\hline $S_6^{\star}$ & (10,5)  & 54 & $> 300$ & {\bf 41.949} & 1  \\
\hline $S_7^{\star}$ & (8,4) & 32 & {\bf 5.194} & 7.192 & 1  \\
\hline $S_8^{\star}$ & (10,5) & 20 & {\bf 0.234} & 1.248 & 0  \\
\hline $S_9^{\star}$ & (6,3) & 44 & $> 300$ & {\bf 3.148} & 0  \\
\hline $S_{10}$ & (4,4) & 16 & $>300$ & {\bf 22.136} & 0  \\
\hline $S_{11}^{\star}$ & (12,6) & 70 & 1.357 & {\bf 0.406} & 0  \\
\hline $S_{12}$ & (2,3) & 8 & $>300$ &  {\bf 4.306} & 1  \\
\hline $S_{13}$ & (3,4) & 15 & $>300$ & {\bf 33.993} & 1  \\
\hline $S_{14}$ & (3,4) & 20 & {\bf 5.959} & 30.592 & 2  \\
\hline $S_{15}^{\star}$ & (8,4) & 16 & {\bf 0.125} & $>300$ & 2   \\
\hline $S_{17}$ & (8,8) & 64 & $>300$ & {\bf 1.747} & 0  \\
\hline $S_{18}$ & (10,10) & 100 & $>300$ & {\bf 2.309} & 0 \\
\hline $S_{19}$ & (9,9) & $\star$ & $>300$ & {\bf 19.313} & 0 \\
\hline $S_{20}$ & (7,7) & 98 & $>300$ & {\bf 8.970} & 2  \\
\hline $S_{21}$ & (12,12) & 144 & $>300$ & {\bf 9.407} & 1  \\
\hline $S_{22}$ & (13,13) & 169  &  $>300$ & {\bf 9.313} & 0  
\\
\hline
\end{tabular}
\end{center}
\begin{center}
{\bf Table 4:} Performance of our algorithm with bigger inputs
\end{center}

The surface $S_2^{\star}$ corresponds to the Clebscht surface, a non-singular cubic surface, so it is a good benchmark for the algorithm. In this case, the algorithm detects 18 straight lines because there are 6 lines not covered by the parameterization, and 3 lines at infinity; so $18+6+3=27$, as predicted by the classical theory. Although in most cases our algorithm beats the brute-force approach, in Table 4 we also see that there can be cases where the brute-force approach works better. This happens, in particular, in $S_{15}^{\star}$ (and also in $S_{15}$, as shown in Table 3). The reason behind the behavior of $S_{15}^{\star}$ is that the computation of the absolute factorization of $\mu(t,s)$ takes a long time. It is possible to reduce this computation by factoring $\xi(t,s)$ first over the rationals, and then computing the absolute factors of each irreducible factor of $\xi(t,s)$ over $\mathbb{ Q}$; this way, the computation time is reduced to 23.228 seconds. In any case, $S_{15}^{\star}$ is illustrative of the tradeoff between our method, and the second brute-force approach: while in our case one has to deal with absolute factoring, in the second brute-force approach one has to deal with polynomial systems in 4 variables. The latter can be advantageous when the input has small size, but it gets increasingly worse as the problem grows in size.

\section{Conclusions.} 

We have presented an algorithm to compute the straight lines contained in an algebraic surface, defined by a rational parameterization. The algorithm is based on the well-known result in Differential Geometry characterizing real, non-singular straight lines contained in a surface as lines which are simultaneously asymptotic lines, and geodesic lines. Experiments conducted on non-trivial examples with moderate to medium degrees, show that our method is generally better than the brute-force approach. 

The method can certainly be generalized to the case of implicit algebraic surfaces. In that case, we need to use the Implicit Function Theorem to write the coefficients of the first and second fundamental forms and the Christoffel symbols in terms of $x,y,z$. Afterwards, some auxiliary variables must be properly eliminated. However, the experiments we have conducted show that this approach is not better than the brute-force approach. For this reason, we have left the implicit case out of the paper. 

\section*{References.}

\newpage

\section{Appendix I: parametrizations used in the examples.}

The rational parametrizations used in Section \ref{sec-experimentation} are the following:

\begin{itemize}
\item $S_1$:
\[
\bfx(t,s)=(-s^3+3t^2s+3s,3s^2t-t^3+3t,3s^2-3t^2)
\]
\item $S_2$:
\[
\bfx(t,s)=\left(\frac{(s-1)(ts+t-1)}{-t+t^2+s-s^2},\frac{-t^2s+t+s-1}{-t+t^2+s-s^2},\frac{t(1-t-s^2)}{-t+t^2+s-s^2}\right)
\]
\item $S_3$:
\[
\bfx(t,s)=(st,s^2(t-1),s^3(t+1))
\]
\item $S_4$:
\[
\bfx(t,s)=\left(t+s^3+t^3+1,\frac{2st+s+s^3+t^3+1}{s},\frac{3t^2-t+s^3+t^3+1}{t}\right)
\]
\item $S_6$:
\[
\bfx(t,s)=\left(t^3+\frac{s}{t^2+1}, \frac{t+s}{1+s}, t^5+s\right)
\]
\item $S_7$:
\[
\bfx(t,s)=(t^3-s, ts^3, s^4+t^3)
\]
\item $S_8$:
\[
\bfx(t,s)=(t, s^2, t^5+s)
\]
\item $S_9$:
\[
\bfx(t,s)=\left(\frac{s}{t^2},\frac{s^3+t^2}{s+t},t^3\right)
\]
\item $S_{10}$:
\[
\bfx(t,s)=\left(\frac{t^4+2s^3-st^2-2st}{-2s^4+2s^3+t^3+s^2}, \frac{-2s^4+2s^2t^2-2t^4+st^2-t}{-2s^4+2s^3+t^3+s^2}, \frac{-s^3t+st^3+2t^3+2s}{-2s^4+2s^3+t^3+s^2})\right)
\]
\item $S_{11}$:
\[
\bfx(t,s)=(t^3s^3, t^2s^4, s^5)
\]
\item $S_{13}$:
\[
\bfx(t,s)=\left(\frac{t(s^2-t^2-s)}{q(t)},\frac{s(-2t^3+2st-2t^2+s-1)}{q(t)},\frac{s(-2s^3-2s^2t-t^2+1)}{q(t)}\right),
\]
where $q(t)=-73s^4+97s^2t^2-62s^3-56s^2+87t$.
\item $S_{14}$:
\[
\bfx(t,s)=\left(\frac{t(-s^2t+2st^2+t^2+2s-t)}{q(t)},\frac{s(s^3t+2st^3-2s^3+2st^2)}{q(t)},\frac{s(-2s^3t-2s^2t^2-st)}{q(t)}\right),
\]
where $q(t)=-10s^4-83s^2t^2-4st^3-73s^2+97t^2-62t$.
\item $S_{15}$:
\[
\bfx(t,s)=(t, t^2(s^2+1), s^2+s+1).
\]
\item $S_{17}$:
\[
\bfx(t,s)=(t^8, s^8, -10s^4-83s^2t^2-4st^3-73s^2+97t^2-62t)
\]
\item $S_{18}$:
\[
\bfx(t,s)=(t^{10}, s^{10}, -10s^4-83s^2t^2-4st^3-73s^2+97t^2-62t)
\]
\item $S_{19}$:
\[
\bfx(t,s)=(t^9, s^9, -10s^4-83s^2t^2-4st^3-73s^2+97t^2-62t)
\]
\item $S_{20}$:
\[
\bfx(t,s)=(t^7, s^7, -82s^7+62s^5t^2-10s^3t^4-83t^7-4s^2-73st)
\]
\item $S_{21}$:
\[
\bfx(t,s)=(t^{12}, s^{12}, -82s^2t^9+62s^3t^7-10s^8-83s^7t-4st^7-73s^2)
\]
\item $S_{22}$:
\[
\bfx(t,s)=(t^{13}, s^{13}, -82s^7+62s^5t^2-10s^3t^4-83t^7-4s^2-73st)
\]
\end{itemize}

\end{document}